\theoremstyle{plain}
\newtheorem{theorem}{Theorem}[section]
\crefname{theorem}{Theorem}{Theorems}
\Crefname{theorem}{Theorem}{Theorems}
\newtheorem*{lemma*}{Lemma}
\newtheorem{lemma}[theorem]{Lemma}
\crefname{lemma}{Lemma}{Lemmas}
\Crefname{lemma}{Lemma}{Lemmas}
\newtheorem*{claim*}{Claim}
\crefname{claim}{Claim}{Claims}
\Crefname{claim}{Claim}{Claims}
\crefname{proposition}{Proposition}{Propositions}
\Crefname{proposition}{Proposition}{Propositions}
\newtheorem{corollary}[theorem]{Corollary}
\crefname{corollary}{Corollary}{Corollaries}
\Crefname{corollary}{Corollary}{Corollaries}
\newtheorem{conjecture}[theorem]{Conjecture}
\crefname{conjecture}{Conjecture}{Conjectures}
\Crefname{conjecture}{Conjecture}{Conjectures}
\crefname{question}{Question}{Questions}
\Crefname{question}{Question}{Questions}
\crefname{observation}{Observation}{Observations}
\Crefname{observation}{Observation}{Observations}
\crefname{example}{Example}{Examples}
\Crefname{example}{Example}{Examples}
\theoremstyle{definition}
\crefname{problem}{Problem}{Problems}
\Crefname{problem}{Problem}{Problems}
\newtheorem{definition}[theorem]{Definition}
\crefname{definition}{Definition}{Definitions}
\Crefname{definition}{Definition}{Definitions}
\xpatchcmd{\proof}{\itshape}{\normalfont\proofnamefont}{}{}
\newcommand{\proofnamefont}{}
\renewcommand{\proofnamefont}{\bfseries}
\newcommand{\remove}[1]{}
\DeclareMathOperator{\ex}{ex}
\newcommand{\C}{\mathcal{C}}
\newcommand{\al}{\alpha}
\newcommand{\A}{\mathcal{A}}
\newcommand{\B}{\mathcal{B}}
\newcommand{\F}{\mathcal{F}}
\title{Tree-degenerate graphs and nested dependent random choice}
\author{
        Tao Jiang 
        \thanks{Department of Mathematics, Miami University, Oxford, OH 45056, USA. Email: \texttt{jiangt}@\texttt{miamioh.edu}.
        Research supported by National Science Foundation grant DMS-1855542.}
    \and
	    Sean Longbrake\thanks{
	    Department of Mathematics, Miami University, Oxford, OH 45056, USA. Email: \texttt{
	    longbrsa}@\texttt{miamioh.edu}.
        Research partially supported by National Science Foundation grant DMS-1855542.}	
}
\begin{document}

\maketitle

\begin{abstract}

	\setlength{\parskip}{\medskipamount}
    \setlength{\parindent}{0pt}
    \noindent
    The celebrated dependent random choice lemma states that in a bipartite graph
    an average vertex (weighted by its degree) has the property that almost all small subsets
    $S$ in its neighborhood has common neighborhood almost as large as in the random graph of the
    same edge-density. Two well-known applications of the lemma are as follows.
    The first is a theorem of F\"uredi \cite{Furedi} 
    and of Alon, Krivelevich, and Sudakov \cite{AKS} showing that the maximum number of edges in an $n$-vertex graph 
    not containing a fixed bipartite graph with maximum degree at most $r$ on one side
    is $O(n^{2-1/r})$. This was recently extended by Grzesik, Janzer and Nagy \cite{GJN} 
    to the family of so-called $(r,t)$-blowups of a tree. A second application is a theorem of Conlon, Fox, and Sudakov \cite{CFS},
    confirming a special case of a conjecture of Erd\H{o}s and Simonovits and of Sidorenko, showing that if $H$ is a bipartite graph that contains a vertex complete to the other part and $G$ is a graph 
    then the probability that
    the uniform random mapping from $V(H)$ to $V(G)$ is a homomorphism
    is at least $\left[\frac{2|E(G)|}{|V(G)|^2}\right]^{|E(H)|}$.

    In this note, we introduce a nested variant of the dependent random choice lemma, which might be
    of independent interest. We then apply it  to
    obtain a common extension of the theorem of Conlon, Fox, and Sudakov and the theorem of Grzesik, Janzer, and Nagy,
    regarding Tur\'an and Sidorenko properties of so-called tree-degenerate graphs. 
 \end{abstract}

\section{Introduction}
Given a graph $G$, let $|G|$ denote its number of vertices.
A {\it homomorphism} from a graph $H$ to a graph $G$ is a mapping $f:V(H)\to V(G)$ such that for each edge $uv$ in $H$, $f(u)f(v)$ is an edge in $G$. Let $h_H(G)$ denote the number of homomorphisms from $H$ to $G$ and $t_H(G)=h_H(G)/|G|^{|H|}$. Thus $t_H(G)$ represents
the fraction of mappings from $V(H)$ to $V(G)$ that are homomorphisms. Viewed probabilistically, $t_H(G)$ is the 
probability that the uniform random mapping from $V(H)$ to $V(G)$ is a homomorphism.
A beautiful conjecture of Sidorenko \cite{Sidorenko} is as follows.

\begin{conjecture}[Sidorenko\cite{Sido-old}] \label{conj:sidorenko}
For every bipartite graph $H$ and every graph $G$, 
\[t_H(G)\geq [t_{K_2}(G)]^{e(H)}.\]
\end{conjecture}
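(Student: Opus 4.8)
I would begin by observing that the inequality in \Cref{conj:sidorenko} is invariant under scaling $G$ by an edge-weight and is continuous in the graphon topology, so it suffices to work with a symmetric measurable $W\colon[0,1]^2\to\mathbb{R}_{\ge 0}$ normalized so that $\int_{[0,1]^2}W=1$ (hence $t_{K_2}(W)=1$), and to prove $t_H(W)\ge 1$, i.e.
\[
\log t_H(W)\;=\;\log\int_{[0,1]^{V(H)}}\prod_{uv\in E(H)}W(x_u,x_v)\,d\mathbf{x}\;\ge\;0 .
\]
The engine I would use is the entropy (``logarithmic calculus'') method: for \emph{any} probability density $p$ on $[0,1]^{V(H)}$, Jensen's inequality applied to the concave function $\log$ gives
\[
\log t_H(W)\;\ge\;\sum_{uv\in E(H)}\Ex_{\mathbf{X}\sim p}\!\big[\log W(X_u,X_v)\big]\;+\;h(p),
\]
where $h(p)=-\int p\log p$ denotes the differential entropy. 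The whole problem is thereby reduced to \emph{constructing} a coupling $p$ of the coordinates $(X_v)_{v\in V(H)}$ for which this right-hand side is nonnegative.

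The prototype is the edge distribution itself: $\mu(x,y)=W(x,y)$ is a genuine probability density (because $\int W=1$) and satisfies $\Ex_\mu[\log W(X,Y)]+h(\mu)=0$. When $H$ is a tree, I would root it, sample the root uniformly, and sample each child conditionally from $W(\cdot\,,x_{\mathrm{parent}})$; then $p=\prod_{uv\in E(H)}W(x_u,x_v)$ exactly and the bound collapses to $\log t_H(W)\ge0$. For a general bipartite $H$ the cycles prevent $p$ from literally equalling $\prod_{uv}W$, so I would instead fix a linear order on $V(H)$, define $p$ Markovianly along it (each new vertex sampled conditionally, from the appropriate normalized restriction of $W$, on the already-placed vertices among its neighbours), and then invoke Shearer's and Han's entropy inequalities to argue that $h(p)$ still pays for the entire logarithmic cost $-\sum_{uv\in E(H)}\Ex_p[\log W(X_u,X_v)]$ rather than merely for a spanning tree's worth of edges. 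This is exactly the step in which the structure of $H$ enters: it goes through when $V(H)$ can be ordered so that every non-tree edge points back to a vertex that was itself produced by a conditional sample along the tree --- which is precisely the situation for the tree-degenerate graphs of this paper, a class subsuming both the graphs with a vertex complete to the other side treated by Conlon, Fox, and Sudakov \cite{CFS} and the $(r,t)$-blowups of trees treated by Grzesik, Janzer, and Nagy \cite{GJN}.

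An alternative route, closer to the combinatorial core of this paper, avoids graphons and argues by \emph{embedding}. Reading $t_H(G)\ge t_{K_2}(G)^{e(H)}$ as the assertion that $G$ contains at least density-many homomorphic copies of $H$, I would order $V(H)$ and embed its vertices one at a time into $G$, using dependent random choice --- in fact the nested variant announced in the abstract --- to select, at each stage, a vertex of $G$ whose neighbourhood is \emph{good}: almost every small subset of it has almost density-many common neighbours. Chaining such choices, the count of partial embeddings drops by no more than the edge-density factor per new vertex, which yields the claim. As in the entropy route, the order on $V(H)$ must give every newly embedded vertex a bounded back-neighbourhood already controlled by an earlier good choice, and the nested lemma is exactly what lets the good sets be nested along such an order; this is the role of tree-degeneracy, and it is why the same hypothesis appears in both approaches.

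The hard part will be the coupling construction --- equivalently, the embedding order --- for a bipartite $H$ with \emph{no} usable recursive structure: instances such as $K_{t,t}$ with a perfect matching removed, or bipartite incidence graphs of projective-plane-type configurations, admit no vertex order in which every non-tree edge is cheaply absorbed, and at present no single argument (entropy, dependent random choice, cut-norm reduction, or spectral) is known to settle every bipartite $H$. The concrete, self-contained target I would actually carry out, and the one pursued in the rest of this paper, is therefore to establish the inequality for all \emph{tree-degenerate} bipartite graphs $H$, thereby recovering the theorems of \cite{CFS} and \cite{GJN} and, through the associated Tur\'an-type bound, those of F\"uredi \cite{Furedi} and of Alon, Krivelevich, and Sudakov \cite{AKS}.
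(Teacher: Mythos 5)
You correctly recognized that the boxed statement is a \emph{conjecture}, open in general, and that no proof of it exists --- in this paper or anywhere else. The paper itself treats it exactly that way: it records it as Conjecture~\ref{conj:sidorenko} and proves it only for the tree-degenerate class (Theorem~\ref{thm:main} together with the tensor-power reduction Lemma~\ref{lem:reduction}, yielding Corollary~\ref{cor:tree-degenerate}). Your explicit scoping at the end, where you acknowledge that no known technique handles an arbitrary bipartite $H$ and retreat to the tree-degenerate case, is the right call and matches what the paper actually establishes.

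Of your two sketched routes, the second (one-vertex-at-a-time embedding driven by a nested dependent random choice, with goodness persisting along the order) is precisely the paper's method: it is the nested goodness lemma (Theorem~\ref{thm:universal}, proved via Lemma~\ref{lem:goodness2}), applied block by block along the tree of a block representation. Your first route (logarithmic-calculus/entropy, with Shearer- or Han-type inequalities) is a genuinely different approach, due to Li--Szegedy, Kim--Lee--Lee, and Szegedy; the paper cites it but does not use it, and the two methods cover overlapping but distinct families of $H$. One small slip in your entropy sketch: for a tree $H$, the Markov coupling does \emph{not} literally satisfy $p=\prod_{uv\in E(H)}W(x_u,x_v)$ for general $W$; the conditionals require degree normalizers $d(x_v)=\int W(x_v,y)\,dy$, and the identity $\log t_H(W)\ge 0$ follows only after those normalizers are introduced and then cancelled against the corresponding entropy terms. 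This is a presentational inaccuracy rather than a gap, and it is immaterial to your ultimate restriction to the tree-degenerate case, where the dependent-random-choice route is the one that closes.
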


Since $[t_{K_2}(G)]^{e(H)}=\left(\frac{2e(G)}{n^2}\right)^{e(H)}$, one may view Sidorenko's conjecture as saying that the number of homomorphic copies of $H$ in an $n$-vertex graph $G$ is asymptotically
at least as large as in the $n$-vertex random graph with the same edge-density.
The following lemma, based on tensor products, 
(see Remark 2 in the English version of \cite{Sido-old} for instance), 
is commonly known and is used in many earlier papers
(see \cite{AR},\cite{CFS},\cite{KLL} for instance). It reduces the conjecture
to a slightly weaker statement.
\begin{lemma}[\cite{Sido-old}]\label{lem:reduction}
Let $H$ be a bipartite graph. If there exists a positive constant $c$
depending only on $H$ such that for all graphs $G$ $t_H(G)\geq c[t_{K_2}(G)]^{e(H)}$
holds, then for all $G$, $t_H(G)\geq [t_{K_2}(G)]^{e(H)}$.
\end{lemma}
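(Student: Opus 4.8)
The plan is the standard tensor power argument. Fix a graph $G$ and an integer $k\ge 1$, and let $G^{\otimes k}$ denote the $k$-fold tensor (categorical) power of $G$: its vertex set is $V(G)^k$, and two tuples $(u_1,\dots,u_k)$, $(v_1,\dots,v_k)$ are adjacent precisely when $u_iv_i\in E(G)$ for every $i$. Since $G$ is a simple loopless graph, so is $G^{\otimes k}$ (a loop would require a loop in $G$, and multiedges cannot arise), so the hypothesis of the lemma applies to $G^{\otimes k}$.

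The key point to record is that homomorphism counts and vertex counts are multiplicative under tensor powers. Because $G^{\otimes k}$ is the categorical product of $k$ copies of $G$, a map $f\colon V(H)\to V(G^{\otimes k})$ is a homomorphism if and only if each of its $k$ coordinate projections $f_i\colon V(H)\to V(G)$ is a homomorphism; hence $h_H(G^{\otimes k})=h_H(G)^k$. Together with $|G^{\otimes k}|=|G|^k$ this gives $t_H(G^{\otimes k})=h_H(G)^k/|G|^{k|H|}=t_H(G)^k$, and the same identity applied with $H$ replaced by $K_2$ shows $t_{K_2}(G^{\otimes k})=t_{K_2}(G)^k$. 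Now applying the hypothesis to $G^{\otimes k}$ yields
\[ t_H(G)^k=t_H(G^{\otimes k})\ \ge\ c\,[t_{K_2}(G^{\otimes k})]^{e(H)}=c\,[t_{K_2}(G)]^{k\cdot e(H)}. \]
If $e(H)=0$ the claimed inequality is trivial (both sides equal $1$), so assume $e(H)\ge 1$; if $t_H(G)=0$ then necessarily $e(G)=0$, whence both sides vanish, so assume $t_H(G)>0$. Taking $k$-th roots then gives $t_H(G)\ge c^{1/k}[t_{K_2}(G)]^{e(H)}$.

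Finally, this inequality holds for every $k\ge 1$, while $c>0$ depends only on $H$ and not on $k$ or $G$; letting $k\to\infty$ and using $c^{1/k}\to 1$ produces $t_H(G)\ge[t_{K_2}(G)]^{e(H)}$, which is the desired conclusion. I do not expect a substantive obstacle here: the only steps requiring care are the verification that tensor powers of simple loopless graphs remain simple and loopless (so that the hypothesis genuinely applies to $G^{\otimes k}$), the coordinatewise characterization of homomorphisms into a categorical product, and the bookkeeping for the degenerate cases $e(H)=0$ and $e(G)=0$.
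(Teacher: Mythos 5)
Your proof is correct and is exactly the tensor-power argument the paper has in mind: the paper does not prove Lemma~\ref{lem:reduction} itself but cites it as the standard reduction ``based on tensor products'' from \cite{Sido-old}, and your multiplicativity identities $t_H(G^{\otimes k})=t_H(G)^k$, $t_{K_2}(G^{\otimes k})=t_{K_2}(G)^k$ together with $c^{1/k}\to 1$ are precisely that argument, with the degenerate cases handled correctly.
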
 
When the edge-density of $G$ is sufficiently high, it is expected
that many of the homomorphisms from $H$ to $G$ are injective. 
Erd\H{o}s and Simonovits \cite{ES-cube} made several conjectures regarding the
number of injective homomorphisms. As usual, let $\ex(n,H)$ denote the Tur\'an number of $H$, 
which is the maximum number of edges in an $n$-vertex graph not containing $H$.
Let $h^*_H(G)$ denote the number of injective homomorphisms from $H$ to $G$,
and let $t^*_H(G)=h^*_H(G)/|G|^{H|}$. The first conjecture of
Erd\H{o}s and Simonovits from \cite{ES-cube} states that
for every $c>0$ there is a $c'>0$ such that if
$e(G)>(1+c)\ex(n,H)$ then $t^*_H(G)\geq c' t_{K_2}(G)^{e(H)}$.
The second, weaker, conjecture from \cite{ES-cube} says
if $\ex(n,H)=O(n^{2-\al})$ then there exist constants $0\leq \tilde{\al}\leq \alpha, c,c'>0$ such that if $e(G)>cn^{2-\tilde{\al}}$ then $t^*_H(G)\geq c' t_{K_2}(G)^{e(H)}$.
It is known (see \cite{Sido-old})
that this weaker version is equivalent to Sidorenko's conjecture.
However, compared to the stronger conjecture of Erd\H{o}s and Simonovits,
Sidorenko's conjecture does not give an explicit sharp
edge-density threshold on when to guarantee the stated number of injective homomorphisms.
There is yet another version of the Erd\H{o}s-Simonovits conjecture, given in 
\cite{Simonovits}, that is equivalent to saying that
there exist two constants $c,c'>0$ such that 
if $G$ is an $n$-vertex graph $G$ with $e(G)>c\ex(n,H)$ then $t^*_H(G)\geq c'[t_{K_2}(G)]^{e(H)}$.

Sidorenko \cite{Sidorenko} verified his own conjecture when $H$ is a complete
bipartite graph, an even cycle, a tree, or a bipartite graph with at most four vertices on one side.
Hatami \cite{Hatami} proved that hypercubes satisfy Sidorenko's conjecture by developing a concept of norming graphs.
The first major progress on Sidorenko's conjecture was made by Conlon, Fox and Sudakov \cite{CFS},
who used the celebrated dependent random choice method (see \cite{FS} for a survey) to show

\begin{theorem}[Conlon-Fox-Sudakov \cite{CFS}] \label{thm:CFS}
If $H$ is a bipartite graph with $m$ edges which has a vertex complete to the other part,
then $H$ satisfies Sidorenko's conjecture. 
\end{theorem}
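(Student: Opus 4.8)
The plan is to use the dependent random choice method. By \Cref{lem:reduction} it suffices to produce a constant $c = c(H) > 0$ with $t_H(G) \ge c\,[t_{K_2}(G)]^{e(H)}$ for every graph $G$; write $n = |G|$, $p = t_{K_2}(G) = 2e(G)/n^2$, and let $\bar d = pn$ be the average degree of $G$. Fix a bipartition $(A,B)$ of $H$ with $a_0 \in A$ adjacent to every vertex of $B$, set $\beta = |B|$, enumerate $A \setminus \{a_0\} = \{a_1,\dots,a_k\}$, and put $d_i = \deg_H(a_i)$ (so $N_H(a_i) \subseteq B$ and $e(H) = \beta + \sum_{i=1}^k d_i$, while $|V(H)| = \beta + k + 1$).

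\emph{Step 1: factor the homomorphism count through $a_0$ and $B$.} If $\psi$ is a homomorphism from the subgraph of $H$ induced on $\{a_0\}\cup B$ into $G$ (equivalently, $\psi(a_0)$ is a common neighbour of $\psi(B)$), then a homomorphism $H \to G$ extending $\psi$ is obtained by sending each $a_i$ to any common neighbour of $\psi(N_H(a_i))$, and these choices are independent since the $a_i$ are pairwise non-adjacent. Hence
\[
 h_H(G) \;=\; \sum_{\psi}\ \prod_{i=1}^k \bigl|N_G(\psi(N_H(a_i)))\bigr| ,
\]
the sum ranging over homomorphisms $\psi$ of $\{a_0\}\cup B$.

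\emph{Step 2: a dependent-random-choice weighting of $\psi$.} Pick $\psi(a_0) = v$ with probability proportional to $\deg_G(v)$, and then, independently for each $b \in B$, pick $\psi(b)$ uniformly in $N_G(v)$; this always produces a homomorphism of $\{a_0\}\cup B$, and the total weight of the $\psi$ so produced is $\sum_{v}\deg_G(v)^{\beta}$, which by convexity is at least $n\bar d^{\,\beta} = p^{\beta}n^{\beta+1}$ --- this is exactly the $K_{1,\beta}$-part of the bound. Moreover $v$ lies in each $N_G(\psi(N_H(a_i)))$, so these sets are non-empty, and --- the crux --- since each image $\psi(b)$ is degree-weighted, one shows that the sizes $|N_G(\psi(N_H(a_i)))|$ are, in the appropriate geometric-mean (equivalently entropic) sense, at least a constant multiple of $p^{d_i}n$, the codegree one expects in a quasirandom graph of density $p$. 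Feeding this back into Step 1 via the arithmetic--geometric mean inequality (equivalently, by exhibiting a random homomorphism $H \to G$ of entropy at least $|V(H)|\log n + e(H)\log p - O_H(1)$) yields $h_H(G) \ge c(H)\,p^{\,\beta+\sum d_i}\,n^{\,\beta+k+1} = c(H)\,p^{\,e(H)}\,n^{|V(H)|}$, i.e. $t_H(G)\ge c(H)[t_{K_2}(G)]^{e(H)}$; \Cref{lem:reduction} then removes the constant.

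The main obstacle is Step 2's codegree estimate. The common neighbourhood of a handful of vertices of $G$ need not be anywhere near its quasirandom size: codegrees of small random tuples are not concentrated, and can often be as small as $1$, so one cannot claim that almost all small subsets have large codegree. What rescues the argument is the degree-weighting, together with a convexity/entropy accounting in which the surplus at the highly-connected vertex $a_0$ --- precisely the nonnegative gap in the weighted log-sum inequality $\sum_v \tfrac{\deg_G(v)}{2e(G)}\log\deg_G(v) \ge \log\bar d$ --- is spent to cover any shortfall at $a_1,\dots,a_k$. Tracking this trade-off cleanly when the neighbourhoods $N_H(a_i)\subseteq B$ overlap is the delicate point, and it is what a nested form of dependent random choice is meant to systematise.
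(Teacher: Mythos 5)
Your outline follows the right general strategy (degree-biased dependent random choice through the dominating vertex $a_0$, then convexity/entropy to recover the factor $p^{e(H)}$), but the proof has a genuine gap, and you flag it yourself: the entire content of the theorem is the codegree estimate in Step 2, which you assert ("one shows that the sizes $|N_G(\psi(N_H(a_i)))|$ are, in the appropriate geometric-mean sense, at least a constant multiple of $p^{d_i} n$") and never prove. Your closing paragraph concedes that codegrees of small tuples are not concentrated, that the degree-weighting plus a convexity/entropy accounting is what "rescues" the argument, and that tracking this when the sets $N_H(a_i)\subseteq B$ overlap is "the delicate point" --- but no such accounting is carried out. Note also that even granting a per-$i$ expectation bound, you would need a joint bound of the form $\mathbb{E}\bigl[\log\prod_i |N_G(\psi(N_H(a_i)))|\bigr] \ge \sum_i\bigl(d_i\log p+\log n\bigr)-O_H(1)$ under the biased sampling distribution, and then a further argument converting this into a lower bound on the unweighted sum $\sum_\psi \prod_i |N_G(\psi(N_H(a_i)))|$ from Step 1; the fact that each codegree is nonzero (because $v$ lies in it) is far from enough. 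As written, the argument proves the easy reduction (Step 1 and the $K_{1,\beta}$ count) and postpones exactly the lemma that constitutes the theorem.

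For comparison, the paper does not prove this statement by a direct entropy computation at all: it deduces it from the main theorem, since a bipartite graph with a vertex complete to the other part is tree-degenerate with $|B_0|=1$ (Corollary~\ref{cor:tree-degenerate} plus Lemma~\ref{lem:reduction}). The missing estimate in your Step 2 is precisely what the paper's goodness machinery supplies: Definition~\ref{def:good} formalizes tuples whose common neighborhood is at least $\alpha p^{|T|}n$ and which hereditarily contain mostly good tuples, and Lemma~\ref{lem:goodness2} / Theorem~\ref{thm:universal} show that the contribution of bad tuples to $\sum_S |N(S)|^{\ell}$ is a negligible fraction of $n^{j+\ell}p^{j\ell}$, so that when the blocks of $H$ are embedded sequentially (as in the proof of Theorem~\ref{thm:main}) every parent set lands, with probability $1-O(\beta)$, on a tuple whose common neighborhood has the quasirandom size $\ge \alpha p^{|P(B_i)|}n$. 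If you want to complete your route instead, you must prove the weighted convexity lemma behind CFS's argument (their Lemma 2.1, or the $t_H(G)\ge t_{K_{1,d}}(G)^{m/d}$ bound); in either case that lemma, not the factorization in Step 1, is the heart of the proof.
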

In fact, Conlon, Fox and Sudakov proved the stronger theorem that $H$ contains a vertex complete
to the other part and  the minimum degree in the first
part is at least $d$ then $t_H(G)\geq [t_{K_{r,d}}(G)]^{\frac{m}{rd}}$.
From Theorem~\ref{thm:CFS}, Conlon, Fox, and Sudakov \cite{CFS} also deduced an approximate version of Sidorenko's conjecture.
Since the work of Conlon, Fox, and Sudakov, there has been a lot of further progress on Sidorenko's conjecture. Li and Szegedy \cite{LS} used
entropy method (presented in the form of logarithmic convexity inequalities) to the extend the result of Conlon, Fox and Sudakov to a more general family of graphs $H$, which they refer to as reflection trees.
These ideas were further developed by Kim, Lee and Lee \cite{KLL}, who proved the conjecture for 
what they called tree-arrangeable graphs and showed that if $T$ is a tree
and $H$ is a bipartite graph that satisfies Sidorenko's conjecture then
the Cartesian product of $T$ and $H$ also satisfies Sidorenko's conjecture.
Subsequently, Conlon, Kim, Lee and Lee \cite{CKLL,CKLL2}
and independently Szegedy \cite{szegedy} established more families of bipartite graphs $H$ for which
Sidorenko's conjecture holds. These include bipartite graphs that admit a certain type of tree decomposition,
subdivisions of certain graphs including cliques, and certain cartesian products, and etc. More recently, Conlon and Lee \cite{CL} showed that
Sidorenko's conjecture holds for any bipartite graph $H$ with a bipartition
$(A,B)$ where the number of vertices in $B$ of degree $k$ satisfies a certain divisibility condition for each $k$. As a corollary, for every bipartite graph $H$ with a bipartition $(A,B)$ there is a positive integer $p$ such that
the blowup $H_A^p$ formed by taking $p$ vertex-disjoint copies of $H$ and gluing
all copies of $A$ along corresponding vertices satisfies Sidorenko's conjecture.

Another line of work that motivates our  result is related to a long-standing conjecture of
Erd\H{o}s regarding the Tur\'an number of so-called $r$-degenerate graphs.
Given a positive integer $r$, a graph $H$ is {\it $r$-degenerate} if its
vertices can be linearly ordered such that each vertex has back degree at most $r$.  

\begin{conjecture}[Erd\H{o}s \cite{Erdos-1967}] \label{degenerate-conjecture}
Let $r$ be a fixed positive integer. Let $H$ be any $r$-degenerate
bipartite graph. Then $\ex(n,H)=O(n^{2-1/r})$.
\end{conjecture}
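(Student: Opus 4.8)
Since this is a long-standing open conjecture, the plan is not to settle it outright but to develop the one technique known to make progress on it --- dependent random choice --- as far as it will reach, recovering the known cases and isolating the structural obstruction that stands in the way of the general statement. First I would recall the model case, in which $H$ has a bipartition $(A,B)$ such that every vertex of $A$ has degree at most $r$ (the F\"uredi / Alon--Krivelevich--Sudakov theorem). Given an $n$-vertex graph $G$ with $e(G)\ge Cn^{2-1/r}$, one applies dependent random choice: a vertex $w$ chosen proportionally to its degree has neighborhood $U=N(w)$ which, with positive probability, is large (of order the average degree $2e(G)/n\ge Cn^{1-1/r}$) and from which only few vertices need be deleted so that every $r$-subset of the remainder has common neighborhood of size at least $|B|$. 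One then embeds $B$ into $U$ arbitrarily and embeds the vertices of $A$ one at a time, each into the common neighborhood of its at most $r$ already-placed neighbors while avoiding the few used images; the size bound $\ge|B|$ leaves room at every step. This yields $\ex(n,H)=O(n^{2-1/r})$ in the model case and is the template for everything that follows.

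To reach an arbitrary $r$-degenerate bipartite $H$, I would fix a degeneracy ordering $v_1,\dots,v_N$ in which each $v_i$ has at most $r$ back-neighbors and embed the $v_i$ in this order, maintaining the invariant that the image of $v_i$ may be chosen in a common neighborhood --- of size bounded below by the number of vertices still to be placed --- of the images of the back-neighbors of $v_i$. The difficulty is that the back-neighbors of $v_i$ need not have been embedded into a single previously controlled set, so one application of the lemma cannot supply the required common-neighborhood guarantee for every $r$-tuple that arises along the embedding. The remedy is to iterate: build a nested family $U_1\supseteq U_2\supseteq\cdots$, each obtained by applying dependent random choice inside the previous one, so that the $r$-tuples encountered at each stage still behave as in the random graph. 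This \emph{nested dependent random choice} is what the present paper develops, and for the class of \emph{tree-degenerate} graphs --- those whose back-neighborhoods admit precisely such a laminar organization --- the embedding above goes through, simultaneously giving the Tur\'an bound $\ex(n,H)=O(n^{2-1/r})$ and a matching Sidorenko-type lower bound on $t_H(G)$, and thereby unifying the theorems of Conlon--Fox--Sudakov and of Grzesik--Janzer--Nagy.

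The main obstacle to the full conjecture is exactly that a general $r$-degenerate graph carries no such laminar structure: running along the degeneracy order, the $r$-subsets of $V(G)$ whose common neighborhoods must be controlled overlap in unrestricted patterns, and no bounded iteration of dependent random choice produces one host structure in which all of them are simultaneously pseudorandom. I expect that closing this gap requires a genuinely new idea rather than a further refinement of the method; what the nested variant accomplishes is to capture the largest class of $H$ for which the tree-like organization of back-neighborhoods is available, which is the content of the paper's main result and the best currently attainable approximation to the conjecture as stated.
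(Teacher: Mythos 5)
You are right that this statement is a long-standing open conjecture which the paper itself does not claim to prove, and your proposal correctly frames it as such rather than offering a purported proof. Your description of the strategy --- iterate dependent random choice to build a nested family of host sets so that the back-neighborhood $r$-tuples encountered along a degeneracy order remain pseudorandom --- is essentially the paper's approach (the ``nested goodness lemma'' of Section 3 and the block-by-block embedding in Section 4), and your diagnosis of the obstruction in the general case (a general $r$-degenerate ordering produces overlapping back-neighborhood patterns with no tree-like organization, so a bounded iteration of DRC cannot control all of them simultaneously) is the right one. A small point worth sharpening: the paper's definition of tree-degenerate is not merely that the blocks form a tree, but also that parent-set sizes are monotone down the tree, i.e.\ $|P(B_{\gamma(i)})| \leq |P(B_i)|$ for all $i\geq 2$. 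This condition is what makes the induction in Lemma~\ref{lem:goodness2} and the conditional-probability chain in Section 4 close: at each step the algorithm only ever needs goodness of sequences at least as long as those already guaranteed; drop the monotonicity and the nested DRC would be asked to certify common-neighborhood lower bounds for shorter sequences than were controlled, which the goodness notion does not supply. Finally, it is worth recording that the best known general bound toward Conjecture~\ref{degenerate-conjecture} remains the Alon--Krivelevich--Sudakov $O(n^{2-1/4r})$ cited in Section~\ref{sec:conclusion}, and neither the paper's result nor your sketch improves on that beyond the tree-degenerate family.
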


The first major progress on Conjecture~\ref{degenerate-conjecture} was the following theorem, which
was first obtained by F\"uredi \cite{Furedi} in an implicit form and then later
reproved by Alon, Krivelevich, and Sudakov \cite{AKS} using the 
dependent random choice method. 
\begin{theorem}[F\"uredi \cite{Furedi}, Alon-Krivelevich-Sudakov \cite{AKS}]
\label{thm:one-side}
Let $r$ be a positive integer.
Let $H$ be a bipartite graph with maximum degree 
at most $r$ on one side.
Then $\ex(n,H)=O(n^{2-1/r})$.
\end{theorem}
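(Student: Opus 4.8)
The plan is to show there is a constant $C = C(H)$ such that every $n$-vertex graph $G$ with $e(G) \ge Cn^{2-1/r}$ contains $H$; this is the dependent random choice argument. Fix a bipartition $(A,B)$ of $H$ in which every vertex of $A$ has degree at most $r$, and write $a = |A|$ and $b = |B|$; by adding isolated vertices to $B$ (which only enlarges $H$, hence only enlarges $\ex(n,H)$, and preserves the hypothesis) we may assume $b \ge r$. The goal is to find a set $U \subseteq V(G)$ with $|U| \ge b$ such that $|N_G(S)| \ge a+b$ for every $S \subseteq U$ with $|S| = r$. Given such a $U$, one embeds $H$ greedily: map $B$ injectively into $U$ (possible as $|U| \ge b$), then process the vertices of $A$ one at a time, for each $\alpha \in A$ extending the image of its neighbourhood — which lies in $U$ and has size at most $r$ — to an $r$-subset $S$ of $U$ (possible as $|U| \ge b \ge r$), and choosing $f(\alpha) \in N_G(S)$ distinct from the at most $a+b-1$ vertices already used, which is possible since $|N_G(S)| \ge a+b$.

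To produce $U$, I would sample an ordered $r$-tuple $T = (x_1,\dots,x_r)$ uniformly and independently from $V(G)$ and let $U_0 = \bigcap_{i=1}^{r} N_G(x_i)$. Since $\Pr[v \in U_0] = (d_G(v)/n)^r$, convexity of $x \mapsto x^r$ gives $\Ex|U_0| = \sum_v (d_G(v)/n)^r \ge n(\bar d/n)^r$, where $\bar d = 2e(G)/n \ge 2Cn^{1-1/r}$, so $\Ex|U_0| \ge (2C)^r$. Call an $r$-set $S$ \emph{bad} if $|N_G(S)| < a+b$; for a fixed bad $S$ we have $\Pr[S \subseteq U_0] = (|N_G(S)|/n)^r < ((a+b)/n)^r$, so the expected number of bad $r$-subsets of $U_0$ is at most $\binom{n}{r}((a+b)/n)^r \le (a+b)^r$. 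Deleting one vertex from each bad $r$-subset contained in $U_0$ yields a set $U$ with no bad $r$-subset and $|U| \ge |U_0| - \#\{S \subseteq U_0 : S \text{ bad}\}$; taking expectations, $\Ex|U| \ge (2C)^r - (a+b)^r$, which is at least $b$ once $C$ is chosen large enough in terms of $a,b,r$ — all of which depend only on $H$. Hence some outcome of $T$ gives a set $U$ with $|U| \ge b$ and no bad $r$-subset, and the embedding above goes through.

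I expect no real obstacle, since this is the prototypical application of dependent random choice; the one point to get right is the exponent bookkeeping — taking the sample size to be $r$ makes the gain $n(\bar d/n)^r$ and the loss $\binom{n}{r}((a+b)/n)^r$ both of constant order, with the gain beating the loss precisely once $e(G)$ exceeds a suitable constant times $n^{2-1/r}$. The preliminary reductions (padding $B$ with isolated vertices to reach $b \ge r$, and for each vertex of $A$ of degree below $r$ padding its embedded neighbourhood arbitrarily up to size $r$) are harmless because enlarging $S$ only shrinks $N_G(S)$.
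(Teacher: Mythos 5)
Your argument is correct: it is the classical single-shot dependent random choice proof (sample an $r$-tuple, take the common neighbourhood, delete a vertex from each ``bad'' $r$-subset, then embed $B$ into the cleaned set and the vertices of $A$ greedily into common neighbourhoods), and the bookkeeping checks out --- the gain $(2C)^r$ beats the loss $(a+b)^r$ once $C$ is large, the padding of $B$ up to size $r$ is harmless since $\ex(n,H)\leq\ex(n,H')$ for $H\subseteq H'$, and monotonicity of common neighbourhoods justifies extending an image neighbourhood to a full $r$-set. Note, however, that the paper does not prove this theorem directly: it cites F\"uredi and Alon--Krivelevich--Sudakov and instead recovers (and strengthens) it as a special case of its main theorem, whose proof runs through the nested goodness lemma (a recursive variant of dependent random choice) --- there one finds a $q$-good root sequence and embeds block by block, at each step landing in sequences that are themselves good at one lower level. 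The trade-off is what you would expect: your route is shorter and more elementary, but yields only a single copy of $H$ under an edge-density hypothesis of the form $e(G)\geq Cn^{2-1/r}$ for the special class of one-side $r$-bounded graphs; the paper's machinery simultaneously handles all tree-degenerate graphs (in particular $(r,t)$-blowups of trees), produces an optimal number of copies, $t^*_H(G)\geq c\,[t_{K_2}(G)]^{e(H)}$, and needs only the weaker $r$-norm density condition $h_{K_{1,r}}(G)=\Omega(n^r)$ rather than a lower bound on $e(G)$.
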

The family of graphs satisfying the condition of Theorem~\ref{thm:one-side}
forms a very special family of $r$-degenerate bipartite graphs, which we will refer
to as {\it one side $r$-bounded} bipartite graphs.
Recently, Grzesik, Janzer, and Nagy \cite{GJN}, among other things, extended Theorem~\ref{thm:one-side} to a broader family of graphs, called $(r,t)$-blowups of a tree.

\begin{definition}[$(r,t)$-blowups of a tree]\label{dfn:rt-blowup}
Let $r\leq t$ and $m$ be positive integers.
A bipartite graph $H$ is {\it an $(r,t)$-blowup of a tree} (or {\it $(r,t)$-blowup} in short)
with root block $B_0$ and non-root blocks $B_1,\dots, B_m$ if $B_0,B_1,\dots, B_m$ partition $V(H)$,
$|B_0|=r, |B_1|=\dots=|B_m|=t$
 and $H$ can be constructed by  joining $B_1$ completely to $B_0$ and for each $2\leq i\leq m$ joining $B_i$ completely to a $r$-subset 
 of $B_{\gamma(i)}$ for some $\gamma(i)\leq i-1$.
\end{definition}

\begin{theorem} [Grzesik-Janzer-Nagy \cite{GJN}] \label{thm:GJN}
Let $r\leq t$ be positive integers. If $H$ is  an $(r,t)$-blowup of a tree,
then $\ex(n,H)=O(n^{2-1/r})$.
\end{theorem}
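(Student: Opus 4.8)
The plan is to deduce the theorem from a \emph{nested} form of the dependent random choice lemma. It suffices to find a constant $C=C(H)$ so that every $n$-vertex graph $G$ with $e(G)\ge Cn^{2-1/r}$ contains $H$ (small $n$ being absorbed into $C$); fix such a $G$, and by passing to a subgraph assume $\delta(G)\ge e(G)/n\ge Cn^{1-1/r}$. Write the blocks of $H$ as $B_0,B_1,\dots,B_m$ with parent function $\gamma$ as in \cref{dfn:rt-blowup}, let $S_i\subseteq B_{\gamma(i)}$ $(i\ge1)$ be the $r$-set to which $B_i$ is joined (so $S_1=B_0$), let $d$ be the depth of the rooted block-tree on $\{0,\dots,m\}$, and set $N=|V(H)|=r+mt$.

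Call $W\subseteq V(G)$ \emph{$k$-rich} if $|W|\ge N$ and either $k=0$, or $k\ge1$ and every $r$-subset $S\subseteq W$ satisfies $N_G(S)\supseteq W'$ for some $(k-1)$-rich set $W'$. Two easy properties will do the work. \emph{Heredity:} any subset of size $\ge N$ of a $k$-rich set is again $k$-rich, since its $r$-subsets are $r$-subsets of the larger set. \emph{Embedding:} a $d$-rich set $W$ yields a copy of $H$. One processes $B_0,\dots,B_m$ in this order (legal since $\gamma(i)<i$), maintaining for each embedded $B_i$ a $(d-\mathrm{depth}(i))$-rich set $Z_i\supseteq f(B_i)$; take $Z_0=W$ and $f(B_0)$ an arbitrary $r$-subset of $W$; to embed $B_i$ with $p=\gamma(i)$, note that $f(S_i)$ is an $r$-subset of the $(d-\mathrm{depth}(p))$-rich set $Z_p$, so $N_G(f(S_i))$ contains a $(d-\mathrm{depth}(i))$-rich set $Z_i$, and as $|Z_i|\ge N$ one can choose $f(B_i)\subseteq Z_i$ of size $|B_i|$ avoiding the at most $N-|B_i|$ previously used vertices. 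Every edge of $H$ runs between some $B_i$ and $S_i$ and is present because $f(B_i)\subseteq Z_i\subseteq N_G(f(S_i))$, and $f$ is injective by construction.

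So everything comes down to the \emph{nested dependent random choice} claim: for all $r,d$ there is $C$ such that every $n$-vertex $G$ with $\delta(G)\ge Cn^{1-1/r}$ contains a $d$-rich set. I would prove this by induction on $d$. The case $d=0$ is immediate, and $d=1$ is the classical lemma with sampling parameter $r$: picking $r$ uniform random vertices, and pruning from their common neighbourhood one vertex of each $r$-subset having fewer than $N$ common neighbours, yields a set of size $\gtrsim C^r\ge N$ (for $C$ large) in which every $r$-subset has $\ge N$ common neighbours. For the inductive step, by heredity it is enough to produce (a) one $(d-1)$-rich set $Z^\ast$, and (b) a set $W$ with $|W|\ge N$ such that every $r$-subset of $W$ has at least $N$ common neighbours inside $Z^\ast$: then for such an $r$-subset $S$ the set $N_G(S)\cap Z^\ast$ is an $\ge N$-subset of $Z^\ast$, hence $(d-1)$-rich, and sits inside $N_G(S)$. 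Part (b) is a dependent random choice in the bipartite graph between $V(G)$ and $Z^\ast$ (sampling $r$ random vertices of $Z^\ast$).

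\textbf{The hard part.} The obstacle is that (a) and (b) must be produced compatibly. The choice in (b) works only when $Z^\ast$ is \emph{heavy} — the number of edges of $G$ meeting $Z^\ast$ should be of order $n^{2-1/r}$ — for otherwise the common neighbourhood of $r$ random vertices of $Z^\ast$ is too small, or too many of its $r$-subsets are deficient. But a $(d-1)$-rich set handed down by the induction need not be heavy: its vertices may all have degree only of order $Cn^{1-1/r}$, giving merely of order $|Z^\ast|\,n^{1-1/r}$ incident edges, which for $|Z^\ast|=o(n)$ is far short of $n^{2-1/r}$. This is exactly why the $d$ levels of random choice cannot be carried out independently and must be \emph{nested}: one should build the rich sets at all $d$ scales inside a single correlated random experiment, drawing the random vertices that carve out each level from a region already certified — by the earlier rounds — to host rich neighbourhoods, while tracking both that richness is inherited downward and that the bipartite graphs used at every level keep $\Omega(n^{2-1/r})$ edges. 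Arranging this nesting — and in particular coping with the fact that at edge-density $n^{2-1/r}$ the common neighbourhood of an $r$-set has only constant size, so one cannot naively recurse inside common neighbourhoods — is the technical core, and the step I expect to be the real difficulty.
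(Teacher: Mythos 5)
Your reduction is sound as far as it goes: the definition of $k$-rich sets, the heredity observation, and the greedy block-by-block embedding of $H$ from a $d$-rich set are all correct. But the proof has a genuine gap exactly where you flag it: the ``nested dependent random choice'' claim --- that every $n$-vertex graph with $\delta(G)\geq Cn^{1-1/r}$ contains a $d$-rich set --- is never proved, and your final paragraph is an accurate diagnosis of why the naive induction fails (a $(d-1)$-rich set handed down by induction need not be heavy, and at density $n^{2-1/r}$ the common neighbourhood of an $r$-set has only constant size, so one cannot recurse inside it, nor run a useful bipartite DRC against a constant-sized $Z^{\ast}$). Since that claim is the entire content of the theorem beyond the routine embedding step, the proposal is incomplete. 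There is also a structural problem with your formulation even granting more work: you demand that \emph{every} $r$-subset of a rich set have a rich common neighbourhood, but at this density all the relevant sets are of bounded size, so the standard cleaning step (deleting a vertex from each deficient $r$-subset) that upgrades ``almost all'' to ``all'' cannot be afforded; some fractional relaxation is forced.

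That fractional relaxation is precisely how the paper resolves the difficulty. Instead of nested rich sets, it defines $i$-good \emph{sequences} (\cref{def:good}), always measured against the whole host graph and the global density $p=p_r(G)$: $S$ is $0$-good if $|N(S)|\geq \alpha p^{|S|}n$, and $i$-good if moreover, for every relevant length $k$, at least a $(1-\beta)$-fraction of the length-$k$ sequences in $N(S)$ are $(i-1)$-good. The counting core (\cref{lem:goodness2}) is an induction on $i$: double counting pairs $(S,T)$ with $T$ a bad sequence in $N(S)$, plus the power-mean inequality (\cref{lem:power-means}), shows the bad sequences carry only a $\beta$-fraction of the weight $\sum_S |N(S)|^{\ell}$, with constants $\alpha_i$ chosen so the errors telescope; \cref{thm:universal} then guarantees many good sequences of every length between $r$ and $h$. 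Because goodness is fractional, the embedding is done by a uniform random map rather than greedily: conditioned on a parent set landing on a good sequence, a child block lands in its (large) common neighbourhood with probability at least $\alpha^{|B_i|}p^{|P(B_i)||B_i|}$, and a union bound over the at most $h$ parent sets inside a block keeps the conditional failure probability below $h\beta$; injectivity is restored at the end at the cost of a constant factor. This sidesteps both your heaviness obstruction (no second-level DRC inside a small set is ever performed) and the ``all $r$-subsets'' cleaning issue, and it yields the stronger counting form \cref{cor:(r,t)-blowup}, not just one copy of $H$. In short: your reduction matches the paper's outline, but the missing nested lemma is where all the work lies.
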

Since every one-side $r$-bounded graph is a subgraph of an $(r,t)$-blowup with two blocks $B_0,B_1$,
Theorem \ref{thm:GJN} substantially generalizes Theorem~\ref{thm:one-side}.

In this paper, we give a common strengthening of Theorem~\ref{thm:CFS} and Theorem~\ref{thm:GJN}
by proving a general theorem on the Tur\'an and Sidorenko properties of 
so-called tree-degenerate graphs.

\begin{definition}[Tree-degenerate graphs]
A bipartite graph $H$ is {\it tree-degenerate} with root block $B_0$ and non-root blocks $B_1,\dots, B_m$ if $B_0,B_1,\dots, B_m$ partition $V(H)$
 and $H$ can be constructed by letting $P(B_1)=B_0$ and joining $B_1$ completely to $B_0$ and for each $2\leq i\leq m$ joining $B_i$ completely to a subset $P(B_i)$ 
 of $B_{\gamma(i)}$ for some $1\leq \gamma(i)\leq i-1$, such that 
 for all $i\geq 2$ $|P(B_{\gamma(i)})|\leq |P(B_i)|$.
 We call $P(B_i)$ the {\it parent set} of $B_i$ and $B_{\gamma(i)}$ the {\it parent block} of $B_i$
 and we call $P$ the {\it parent function}.
 We call $(B_0,\dots, B_m,P)$ a {\it block representation} of $H$.
\end{definition}

We present our main result in terms of so-called $r$-norm density.
We will explain the advantage of doing so after the presenting the theorem.
Let $G$ be a graph with $n$ vertices. For each positive integer $r$,
we define the the {\it $r$-norm density} of $G$, denoted by $p_r(G)$, as 
\[p_r(G):=t_{K_{1,r}}(G)^{1/r}.\]

Note that $p_1(G)=t_{K_2}(G)=\frac{2e(G)}{n^2}$,  is the usual edge-density of $G$.
In general, one may view $p_r(G)$ as a modified measure of edge-density of $G$ that takes the degree distribution into account. Using convexity, one can show that $p_r(G)\geq p_s(G)$ whenever $r\geq s$ (see Lemma~\ref{lem:monotone}).

\begin{theorem}[Main theorem] \label{thm:main}
For any graph $G$ and positive integer $\ell$, let $p_\ell(G)=t_{K_{1,\ell}}(G)^{1/\ell}$.
Let $H$ be a tree-degenerate graph with a block representation $(B_0,B_1,\dots, B_m,P)$.
Let $s=|B_0|$ and $r=\max_i|P(B_i)|$.
There exist  positive constants $c_1=c_1(H), c_2=c_2(H), c_3=c_3(H)$ depending only on $H$ such 
that for any graph $G$ 
\[t_H(G)\geq c_1 [p_s(G)]^{e(H)}\geq c_1[t_{K_2}(G)]^{e(H)}.\]
Furthermore, if  $h_{K_{1,r}}(G)>c_2n^r$, where $n=|G|$, then 
\[t^*_H(G) \geq c_3 [p_r(G)]^{e(H)}\geq c_3 [t_{K_2}(G)]^{e(H)}.\]
\end{theorem}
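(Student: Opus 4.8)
The plan is to reduce $h_H(G)$ and $h^*_H(G)$ to an iterated count over the block tree and to feed that count with a nested dependent random choice argument. Write $p_i=|P(B_i)|$ and $b_i=|B_i|$, so $p_1=s$ and $\max_i p_i=r$. Each block is an independent set of $H$ whose only incident edges go to its parent set, so $e(H)=\sum_{i=1}^m p_i b_i$, and a homomorphism $H\to G$ is exactly a choice, for each block $B_i$, of a map $\phi_i:B_i\to V(G)$ with
\[
\phi_i(B_i)\subseteq \bigcap_{x\in P(B_i)} N_G\big(\phi_{\gamma(i)}(x)\big),
\]
where $\phi_0$ ranges freely over $V(G)^{B_0}$ and $\gamma(1)$ is read as $0$. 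Thus $h_H(G)$ is an iterated sum over the block tree rooted at $B_1$. The monotonicity $p_{\gamma(i)}\le p_i$ says parent-set sizes are non-decreasing along root-to-leaf paths, so $s\le p_i\le r$ for every $i$; this is the only place monotonicity is used, but it is used crucially below.

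The engine is a \emph{nested dependent random choice} lemma. Given $G$ on $n$ vertices, I would construct a map $\phi_0:B_0\to V(G)$ and sets $U_1,\dots,U_m\subseteq V(G)$ such that $U_1$ is obtained from $N_G(\phi_0(B_0))$ by deleting a small set of vertices with $|U_1|\ge \tfrac12\, n\, p_s(G)^{s}$, and, processing top-down, for every non-leaf $B_i$ the set $U_i$ is \emph{good}: for each child $B_k$ of $B_i$, all but a negligible fraction of the $p_k$-subsets $S\subseteq U_i$ satisfy $|N_G(S)|\ge \lambda\, p_s(G)^{p_k}\, n$ for a constant $\lambda=\lambda(H)>0$, and once $P(B_k)$ is placed at such a subset the resulting common neighborhood $U_k$ can in turn be fed to the lemma for $B_k$'s children. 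The proof runs ordinary (weighted) dependent random choice one level at a time: at $B_0$ pick $\phi_0(B_0)$ so that $N_G(\phi_0(B_0))$ is large and, for each child $k$ of $B_1$, contains few $p_k$-subsets with small common neighborhood, and delete the vertices in such subsets to get $U_1$; at an internal $B_i$ note that for a uniform random $p_i$-tuple $T$ in $U_i$ one has $\Ex\big[\#\{\text{bad }p_k\text{-subsets of }N_G(T)\}\big]\lesssim n^{p_k}\beta^{p_i}$ when ``bad'' means common neighborhood below $\beta n$, while $|U_i|^{p_k}\gtrsim \big(p_s(G)^{c}n\big)^{p_k}$ for the power $c$ carried along the induction; comparing the two forces the admissible threshold to be a constant times $p_s(G)^{p_k}$, and this works precisely because $p_i=p_{\gamma(i)}\le p_k$, i.e.\ the tuple size does not exceed the subset size. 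A union bound over the $O_H(1)$ blocks assembles the levels.

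Granting the lemma, the first inequality follows by downward induction on the block tree. For a block $B_i$ let $v_i$, $e_i$ be the number of vertices, edges of the sub-bipartite-graph of $H$ spanned by $P(B_i)$ together with all blocks in the subtree rooted at $B_i$ (so $e_i$ counts the $B_i$--$P(B_i)$ edges). I would show that whenever $P(B_i)$ is placed at a tuple whose common neighborhood has size at least $M$, the number of ways to embed the remaining $v_i$ vertices is at least $c_i\, M^{b_i}\, n^{v_i-b_i}\, p_s(G)^{\,e_i-p_i b_i}$; the base case ($B_i$ a leaf, $c_i=1$) is immediate. For the inductive step, extract from $N_G(\phi_{\gamma(i)}(P(B_i)))$ a good subset $U_i$ of size $\ge \tfrac12 M$ and map $B_i$ into $U_i$; for a positive fraction of such maps every $P(B_k)$ lands in a good $p_k$-subset, so each child contributes, by induction, at least $c_k\big(\lambda p_s(G)^{p_k}n\big)^{b_k}n^{v_k-b_k}p_s(G)^{e_k-p_k b_k}$. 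Since $v_i-b_i=\sum_k v_k$ and $e_i-p_i b_i=\sum_k e_k$, multiplying these out makes the powers of $n$ and of $p_s(G)$ collapse to the claimed form, with $c_i$ an explicit product of $c_k$'s, $\lambda$'s and the union-bound constant; the surplus $\sum_k p_k b_k$ in the exponent of $p_s(G)$ is absorbed exactly by the factors $p_s(G)^{p_k b_k}$. Applying this at the root and summing over $\phi_0\in V(G)^{B_0}$, convexity of $x\mapsto x^{b_1}$ gives $\sum_{\phi_0}|N_G(\phi_0(B_0))|^{b_1}\ge n^{s}\big(n\,p_s(G)^{s}\big)^{b_1}$, and since $v_1=|H|-s$ and $e_1=e(H)$ one gets $h_H(G)\ge c_1\,n^{|H|}\,p_s(G)^{e(H)}$; the final inequality $p_s(G)\ge t_{K_2}(G)$ is \Cref{lem:monotone}.

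For the second inequality one runs the same scheme with $p_r(G)$ replacing $p_s(G)$, using $p_k$-subsets at the current block and $r$-subsets wherever the construction needs a uniform size, so that the hypothesis $h_{K_{1,r}}(G)>c_2 n^r$, equivalently $n\,p_r(G)^{r}>c_2$, enters; this makes every common neighborhood produced by the construction not merely nonempty but of size at least a prescribed constant $C_0=C_0(H)$, provided $c_2$ is large enough, since along the finite block tree each level costs only an $H$-dependent constant times a bounded power of $n\,p_r(G)^{r}>c_2$. Once every $U_i$ and every $N_G(\phi_i(P(B_k)))$ has size $\ge C_0(H)\ge |V(H)|$, the embedding is carried out greedily and injectively, each new vertex placed among the at least $C_0(H)-|V(H)|$ admissible vertices not yet used, and tracking the counts as above yields $t^*_H(G)\ge c_3\,p_r(G)^{e(H)}\ge c_3\,t_{K_2}(G)^{e(H)}$. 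The main obstacle is the nested dependent random choice lemma itself, and in particular keeping the goodness thresholds at every level as large as a constant times $p_s(G)^{p_k}$ (respectively $p_r(G)^{p_k}$): this is what makes the telescoping product match $p_s(G)^{e(H)}$ (respectively $p_r(G)^{e(H)}$) up to an $H$-dependent constant, and it is exactly here that the two features of a tree-degenerate graph are indispensable — the monotonicity $|P(B_{\gamma(i)})|\le |P(B_i)|$, without which the single-level estimate yields no useful threshold, and the $r$-norm density, without which the root step would give only $n\,t_{K_2}(G)^{s}$ and break the chain.
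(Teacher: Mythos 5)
Your overall strategy is the same as the paper's: a nested/recursive form of dependent random choice over the block tree, with a goodness notion that propagates down the levels, a product-over-blocks count giving $p^{\sum_i |P(B_i)||B_i|}=p^{e(H)}$, monotonicity of parent-set sizes entering because goodness of a placed parent tuple must cover the (larger) parent sets inside the current block, and the $r$-norm density hypothesis used only to make every common neighborhood of size $\Omega_H(1)$ so that a greedy/probabilistic step yields injectivity. However, the engine of the argument --- the lemma that goodness can be propagated through internal blocks --- is exactly where your sketch has a genuine gap. Your single-level estimate compares $\Ex\bigl[\#\{\text{bad }p_k\text{-subsets of }N_G(T)\}\bigr]\lesssim n^{p_k}\beta^{p_i}$ against $|U_i|^{p_k}\gtrsim (p^{c}n)^{p_k}$; that expectation bound is only valid when the tuple $T$ is uniform over $V(G)^{p_i}$, which is the situation at the root block. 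At an internal block, the image of $P(B_i)$ is constrained to lie in the parent's set $U_{\gamma(i)}$, whose size is only about $p^{p_{\gamma(i)}}n$, so the probability that a fixed bad $p_k$-set $S$ satisfies $T\subseteq N(S)$ is $\bigl(|N(S)\cap U_{\gamma(i)}|/|U_{\gamma(i)}|\bigr)^{p_i}$, not $(|N(S)|/n)^{p_i}$; carrying this through leaves an uncancelled factor of order $p^{-p_i p_{\gamma(i)}}$, which is polynomially large in $n$ when $p\approx n^{-1/r}$, and the monotonicity $p_{\gamma(i)}\le p_k$ does not repair it. So the level-by-level iteration of ``ordinary DRC'' as you describe it does not close, and this is precisely the difficulty the paper's nested goodness lemma is built to overcome: badness is measured in the weighted sense $\sum_{S\ \mathrm{bad}}|N(S)|^{\ell}\le \beta n^{j+\ell}p^{j\ell}$ (proved by induction on the goodness level, with a double count of pairs $(S,T)$ and the power-mean inequality, which is where the constraint ``lengths $k\ge |S|$'' and hence the monotonicity hypothesis really enters), and $i$-goodness is defined by the proportion of $(i-1)$-good tuples inside $N(S)$, so that the final argument only ever needs conditional probabilities of the uniform random map, never a fresh DRC step inside a shrunken set.

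Two smaller manifestations of the same issue: your downward-induction statement ``whenever $P(B_i)$ is placed at a tuple whose common neighborhood has size at least $M$, the number of extensions is at least $c_iM^{b_i}n^{v_i-b_i}p_s^{e_i-p_ib_i}$'' cannot be true from the size hypothesis alone (the neighborhood could consist entirely of degree-one vertices), so the induction must carry the recursive goodness of the tuple, not just $|N|\ge M$; and in the root step you sum $\sum_{\phi_0}|N_G(\phi_0(B_0))|^{b_1}$ over \emph{all} $\phi_0$, whereas the per-$\phi_0$ lower bound is only available for good $\phi_0$, so you need the weighted statement that bad root sequences contribute a negligible share of that sum --- again the content of the paper's Lemma~\ref{lem:goodness2} and Theorem~\ref{thm:universal}. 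The injectivity part of your plan is fine and matches the paper. In short: right architecture, but the crucial quantitative mechanism for nesting is missing, and the estimate you propose in its place fails at every block below the root.
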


The first part of the theorem and Lemma~\ref{lem:reduction} imply
the following.
\begin{corollary} \label{cor:tree-degenerate}
Let $H$ be a tree-degenerate graph. Then $H$ satisfies Sidorenko's conjecture.
\end{corollary}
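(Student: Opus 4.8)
The plan is to combine the first inequality of \Cref{thm:main} with the tensor-power reduction in \Cref{lem:reduction}. Since $H$ is tree-degenerate, it admits a block representation $(B_0, B_1, \dots, B_m, P)$; I fix one such representation and set $s = |B_0|$. Applying the first part of \Cref{thm:main} to this representation produces a positive constant $c_1 = c_1(H)$, depending only on $H$, such that
\[
t_H(G) \;\geq\; c_1 \, [p_s(G)]^{e(H)} \;\geq\; c_1 \, [t_{K_2}(G)]^{e(H)}
\]
for every graph $G$. The second inequality is just the monotonicity $p_s(G) \geq p_1(G) = t_{K_2}(G)$ (from \Cref{lem:monotone}, or directly from convexity), so after this step we already have the approximate Sidorenko bound $t_H(G) \geq c_1 [t_{K_2}(G)]^{e(H)}$ with a multiplicative constant that does not depend on $G$.

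From here I would invoke \Cref{lem:reduction} verbatim with $c = c_1$: because $c_1$ depends only on $H$ and the bound $t_H(G) \geq c_1 [t_{K_2}(G)]^{e(H)}$ holds for all graphs $G$, the lemma upgrades it to $t_H(G) \geq [t_{K_2}(G)]^{e(H)}$ for all $G$. Since $t_{K_2}(G) = \tfrac{2e(G)}{n^2}$, this is exactly the assertion of \Cref{conj:sidorenko} for the bipartite graph $H$, which completes the proof.

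I do not expect any real obstacle in this deduction; the entire substance is contained in \Cref{thm:main}, which is available here. The only points needing a moment's care are that the approximate bound coming from \Cref{thm:main} has its constant depending solely on $H$ (this is precisely the hypothesis of \Cref{lem:reduction}, and is guaranteed by the statement of the theorem), and that $H$ is bipartite — part of the definition of a tree-degenerate graph — so that \Cref{lem:reduction} is applicable in the first place.
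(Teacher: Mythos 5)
Your proposal is correct and matches the paper's own derivation: the corollary is obtained exactly by combining the first inequality of \Cref{thm:main} (whose constant $c_1$ depends only on $H$) with the tensor-power reduction of \Cref{lem:reduction}. Nothing further is needed.
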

Since a bipartite graph $H$ containing a vertex complete to the other part
is tree-degenerate with $|B_0|=1$ and $\gamma(i)=1$ for all $i\geq 2$, Corollary~\ref{cor:tree-degenerate}
generalizes Theorem~\ref{thm:CFS}.

A special case of
the second part of Theorem~\ref{thm:main} yields the following.
\begin{corollary} \label{cor:(r,t)-blowup}
Let $r\leq t$ be positive integers. Let $H$ be an $(r,t)$-blowup of a tree with $h$ vertices. 
Then $\ex(n,H)=O(n^{2-1/r})$. Furthermore,
there exist constants $c,c'>0$ such that every $n$-vertex graph $G$ with $h_{K_{1,r}}(G)\geq cn^r$ contains at least
$c' n^h(\frac{2e(G)}{n^2})^{e(H)}$ copies of $H$.
\end{corollary}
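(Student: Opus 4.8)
The plan is to deduce both assertions directly from Theorem~\ref{thm:main}, once we observe that an $(r,t)$-blowup of a tree is a tree-degenerate graph of a very restricted form. First I would check that if $H$ is an $(r,t)$-blowup with root block $B_0$ and non-root blocks $B_1,\dots,B_m$ as in Definition~\ref{dfn:rt-blowup}, then the construction itself gives a block representation: set $P(B_1)=B_0$ and, for $2\le i\le m$, let $P(B_i)$ be the $r$-subset of $B_{\gamma(i)}$ to which $B_i$ is completely joined. Every parent set then has size exactly $r$ (including $P(B_1)=B_0$, since $|B_0|=r$), so the monotonicity requirement $|P(B_{\gamma(i)})|\le|P(B_i)|$ holds trivially. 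Hence $H$ is tree-degenerate, and Theorem~\ref{thm:main} applies to it with both relevant parameters, $s=|B_0|$ and $\max_i|P(B_i)|$, equal to $r$.

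With this in hand the counting statement is immediate. Take $c:=c_2(H)+1$, so that $h_{K_{1,r}}(G)\ge cn^r$ forces $h_{K_{1,r}}(G)>c_2(H)\,n^r$. The second part of Theorem~\ref{thm:main} then gives $t^*_H(G)\ge c_3(H)\,[t_{K_2}(G)]^{e(H)}$, i.e.\ the number $h^*_H(G)=t^*_H(G)\,n^h$ of injective homomorphisms from $H$ into $G$ is at least $c_3(H)\,n^h\big(\tfrac{2e(G)}{n^2}\big)^{e(H)}$. Dividing by $|\mathrm{Aut}(H)|$ to pass from injective homomorphisms to subgraph copies yields at least $c'\,n^h\big(\tfrac{2e(G)}{n^2}\big)^{e(H)}$ copies of $H$ with $c'=c_3(H)/|\mathrm{Aut}(H)|$.

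For the Tur\'an bound I would argue contrapositively. Suppose $G$ has $n$ vertices and $e(G)\ge Cn^{2-1/r}$ for a constant $C=C(H)$ to be chosen. By convexity (the power-mean inequality applied to the degree sequence),
\[
h_{K_{1,r}}(G)=\sum_{v\in V(G)}d_G(v)^r\ \ge\ n\Big(\tfrac1n\sum_{v\in V(G)}d_G(v)\Big)^r=n\Big(\tfrac{2e(G)}{n}\Big)^r\ \ge\ (2C)^r n^r ,
\]
so taking $C$ large enough that $(2C)^r>c_2(H)$ makes the hypothesis $h_{K_{1,r}}(G)>c_2(H)\,n^r$ hold. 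Then Theorem~\ref{thm:main} gives $t^*_H(G)\ge c_3(H)\,[t_{K_2}(G)]^{e(H)}>0$, so $h^*_H(G)$ is a positive integer and $G$ contains a copy of $H$. Therefore $\ex(n,H)<Cn^{2-1/r}$ for all sufficiently large $n$, and enlarging $C$ to absorb the finitely many small values of $n$ (for which $\ex(n,H)\le\binom n2$ is bounded) gives $\ex(n,H)=O(n^{2-1/r})$.

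There is no genuine obstacle here; all the substance lies in Theorem~\ref{thm:main}. The only points that need a moment's care are: verifying that the defining construction of an $(r,t)$-blowup really is a valid block representation with $s=r$ and $\max_i|P(B_i)|=r$, so that the exponent $r$ appearing in the hypothesis $h_{K_{1,r}}(G)\gtrsim n^r$ and in $n^{2-1/r}$ genuinely coincide; the elementary convexity step converting an edge-density lower bound into the degree-power lower bound that Theorem~\ref{thm:main} requires; and tracking the $|\mathrm{Aut}(H)|$ factor when translating between injective homomorphisms and subgraph copies of $H$.
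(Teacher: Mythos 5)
Your proposal is correct and follows essentially the same route as the paper, which presents this corollary simply as a special case of the second part of Theorem~\ref{thm:main}: your check that an $(r,t)$-blowup is tree-degenerate with $s=\max_i|P(B_i)|=r$, the convexity step $h_{K_{1,r}}(G)\ge n\bigl(2e(G)/n\bigr)^r$ converting the edge-density hypothesis, and the $|\mathrm{Aut}(H)|$ bookkeeping are exactly the intended filling-in. (One small caveat: if Definition~\ref{dfn:rt-blowup} is read as allowing $\gamma(i)=0$ for some $i\ge 2$, the construction is not literally a block representation since the tree-degenerate definition requires $\gamma(i)\ge 1$; this is fixed by first merging all blocks attached to $B_0$ into a single block $B_1$ — legitimate because $|B_0|=r$ forces each such block to be completely joined to $B_0$ — after which your identification applies verbatim with the same parameters.)
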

Corollary~\ref{cor:(r,t)-blowup} strengthens Theorem~\ref{thm:GJN} in two ways.
First, it relaxes the density requirement on $G$ 
from $e(G)=\Omega(n^{2-1/r})$ to $h_{K_{1,r}}(G)=\Omega(n^r)$ (i.e. from $p_1(G)=\Omega(n^{-1/r})$
to $p_r(G)=\Omega(n^{-1/r})$). Second, it not only gives at least one copy of $H$, but an
optimal number (up to a multiplicative constant) of copies of $H$.
A closer examination of  the proof of Theorem~\ref{thm:GJN} given by Grzesik, Janzer, and Nagy in \cite{GJN} 
shows that their proof can be strengthened to also give Corollary~\ref{cor:(r,t)-blowup}.
However, Theorem~\ref{thm:main} is more general than Corollary~\ref{cor:(r,t)-blowup}, 
as the counting statement applies to any tree-degenerate graph $H$, where parent set sizes can vary, instead of just to $(r,t)$-blowups.
The relaxation of $p_1(G)=\Omega(n^{-1/r})$ to $p_r(G)=\Omega(n^{-1/r})$ is also a useful feature, as 
in bipartite Tur\'an problems  sometimes we need to handle cases where the host graph has very uneven degree distribution and hence
high $r$-norm density, despite having relatively low $1$-norm density (see \cite{JMN} for an instance of this kind).

To prove Theorem~\ref{thm:main}, we introduce a notion of goodness and prove a lemma that might be viewed as a 
nested variant of the dependent random choice lemma. Once we establish the lemma, the proof of Theorem~\ref{thm:main} readily follows. Conceivably this variant could find more applications.

We organize our paper as follows. 
In Section \ref{sec:lemmas}, we introduce some preliminary lemmas.
In Section \ref{sec:goodness}, we establish the nested goodness lemma.
In Section \ref{sec:mainproof}, we prove  Theorem~\ref{thm:main}.
In Section \ref{sec:conclusion}, we give some concluding remarks.

\section{Preliminary lemmas } \label{sec:lemmas}

In this section, we first give some useful lemmas. They will be used in motivating
some definitions and will also be used in the proofs in later sections. 
We start with a standard convexity-based inequality, which is sometimes referred to as the power means
inequality. We include a proof for completeness.

\begin{lemma} \label{lem:power-means}
Let $n$ be a positive integer. Let $1\leq a\leq b$ be reals.
Let $x_1,\dots, x_n$ be reals. Then
\[\sum_{i=1}^n x_i^a\leq n^{1-a/b} \cdot\left(\sum_{i=1}^n x_i^b\right)^{a/b}.\]
Equivalently, 
\[\left(\frac{1}{n} \sum_{i=1}^n x_i^a\right)^{1/a}\leq \left(\frac{1}{n}\sum_{i=1}^n x_i^b \right)^{1/b}.\]
\end{lemma}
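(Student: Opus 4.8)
The plan is to deduce this from Jensen's inequality applied to the convex function $\phi(t)=t^{b/a}$. First I would dispose of the trivial cases: if $a=b$ the asserted inequality is an equality, and if every $x_i=0$ both sides vanish; so I may assume $1\le a<b$ and — since the exponents need not be integers — that the $x_i$ are nonnegative with at least one of them positive. I would also remark at the outset that the two displayed forms are literally equivalent: dividing the first by $n$ and using $(1/n)^{a/b}=n^{-a/b}$ turns it into the second, so it suffices to prove either one.

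Set $p=b/a$, so that $p>1$ and $\phi(t)=t^{p}$ is convex (and increasing) on $[0,\infty)$. Put $y_i=x_i^{a}\ge 0$. Applying Jensen's inequality with the uniform probability weights $\tfrac1n$ gives
\[\left(\frac{1}{n}\sum_{i=1}^n y_i\right)^{p}=\phi\!\left(\frac{1}{n}\sum_{i=1}^n y_i\right)\le \frac{1}{n}\sum_{i=1}^n \phi(y_i)=\frac{1}{n}\sum_{i=1}^n x_i^{b}.\]
Raising both sides to the power $1/p=a/b>0$, which preserves the inequality, yields
\[\frac{1}{n}\sum_{i=1}^n x_i^{a}\le\left(\frac{1}{n}\sum_{i=1}^n x_i^{b}\right)^{a/b},\]
which is exactly the normalized (second) form; multiplying through by $n$ recovers the first form $\sum_i x_i^{a}\le n^{1-a/b}\bigl(\sum_i x_i^{b}\bigr)^{a/b}$.

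An alternative route, if one prefers to avoid invoking Jensen directly, is a single application of Hölder's inequality with conjugate exponents $p=b/a$ and $q=b/(b-a)$ to the product $\sum_i x_i^{a}\cdot 1$: this gives $\sum_i x_i^{a}\le\bigl(\sum_i x_i^{b}\bigr)^{a/b}\cdot n^{(b-a)/b}$, and $(b-a)/b=1-a/b$, which is the claim. Either way the proof is short, and I do not anticipate a real obstacle; the only points meriting a word of care are the reduction to nonnegative $x_i$ (so that the non-integer powers make sense) and the observation, noted above, that the two stated inequalities are the same statement after rescaling.
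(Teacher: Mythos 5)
Your proof is correct and follows essentially the same route as the paper: both apply Jensen's inequality to the convex function $t^{b/a}$ evaluated at the values $x_i^a$, then rearrange (your Hölder remark is just a standard equivalent packaging of the same estimate). Your explicit handling of the nonnegativity of the $x_i$ and the $a=b$ case is a harmless tidying of details the paper leaves implicit.
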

\begin{proof}
Since the function $x^{b/a}$ is
either linear or is concave up, by Jensen's inequality, we have
$\sum_{i=1}^n x_i^b=\sum_{i=1}^n (x_i^a)^{b/a}\geq n [\frac{1}{n}\sum_{i=1}^n x_i^a]^{b/a}$. Rearranging, we obtain the desired inequalities.
\end{proof}

Let $G$ be a graph with $n$ vertices. Let $r$ be a positive integer. 
Recall that $p_r(G):=t_{K_{1,r}}(G)^{1/r}$.

\begin{lemma} \label{lem:r-norm}
For any graph $G$ and positive integers $r$, $p_r(G)=\frac{1}{n}\left(\frac{1}{n}\sum_{v\in V(G)} d(v)^r\right)^{1/r}$.
\end{lemma}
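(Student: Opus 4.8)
The plan is to simply unwind the definitions and count homomorphisms from a star. First I would recall that $K_{1,r}$ has one center vertex and $r$ leaf vertices, so $|K_{1,r}| = r+1$. A homomorphism $f$ from $K_{1,r}$ to $G$ is determined by a choice of image $v = f(\text{center}) \in V(G)$ together with a choice of image in $N_G(v)$ for each of the $r$ leaves, independently. Hence
\[
h_{K_{1,r}}(G) = \sum_{v \in V(G)} d(v)^r.
\]

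Next I would substitute this into the definition $t_{K_{1,r}}(G) = h_{K_{1,r}}(G)/|G|^{|K_{1,r}|}$, which gives
\[
t_{K_{1,r}}(G) = \frac{1}{n^{r+1}} \sum_{v \in V(G)} d(v)^r.
\]
Taking $r$-th roots, as in the definition of $p_r(G) = t_{K_{1,r}}(G)^{1/r}$, and pulling out the factor $n^{-(r+1)/r} = n^{-1} \cdot n^{-1/r}$, I obtain
\[
p_r(G) = \left(\frac{1}{n^{r+1}}\sum_{v \in V(G)} d(v)^r\right)^{1/r} = \frac{1}{n}\left(\frac{1}{n}\sum_{v\in V(G)} d(v)^r\right)^{1/r},
\]
which is exactly the claimed identity.

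There is no real obstacle here; the only point requiring a moment's care is the homomorphism count, where one must remember that the $r$ leaves are mapped independently (so each contributes a factor $d(v)$, not a falling factorial, since we are counting all homomorphisms rather than injective ones), and that the exponent in the denominator is $|K_{1,r}| = r+1$ rather than $r$. Once these are handled correctly the rest is routine algebraic rearrangement.
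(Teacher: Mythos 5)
Your proposal is correct and follows essentially the same route as the paper: count homomorphisms from $K_{1,r}$ as $\sum_{v} d(v)^r$ by choosing the image of the center and then the images of the leaves independently, normalize by $n^{r+1}$, and take the $r$-th root. Nothing further is needed.
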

\begin{proof}
Let $n=|G|$. Recall that $t_{1,r}(G)=h_{K_{1,r}(G)}/n^{r+1}$,
where $h_{K_{1,r}}(G)$ is the number of homomorphisms from $K_{1,r}$ to $G$.
It is easy to see that $h_{K_{1,r}}(G)=\sum_{v\in V(G)} d(v)^r$.
Hence, $p_r(G)=t_{K_{1,r}}(G)^{1/r}=\left(\frac{1}{n^{r+1}}\sum_{v\in V(G)} d(v)^r\right)^{1/r}=\frac{1}{n}
\left(\frac{1}{n}\sum_{v\in V(G)} d(v)^r\right)^{1/r}$.
\end{proof}
Lemma~\ref{lem:r-norm} and Lemma~\ref{lem:power-means} imply the following useful fact.
\begin{lemma} \label{lem:monotone}
For any graph $G$ and positive integers $r\geq s$, we have 
$p_r(G)\geq p_s(G)$.
\end{lemma}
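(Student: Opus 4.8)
The plan is to combine the two immediately preceding lemmas with no extra input. First I would use Lemma~\ref{lem:r-norm} to put both sides of the desired inequality into a common normalized form: writing $n=|G|$, we have
\[p_r(G)=\frac{1}{n}\left(\frac{1}{n}\sum_{v\in V(G)} d(v)^r\right)^{1/r}, \qquad p_s(G)=\frac{1}{n}\left(\frac{1}{n}\sum_{v\in V(G)} d(v)^s\right)^{1/s}.\]
So it suffices to compare the two power means $\left(\frac{1}{n}\sum_v d(v)^s\right)^{1/s}$ and $\left(\frac{1}{n}\sum_v d(v)^r\right)^{1/r}$.

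Next I would apply Lemma~\ref{lem:power-means} to the list of reals $x_v = d(v)$, $v\in V(G)$, with the exponent choices $a=s$ and $b=r$. The hypothesis $1\leq a\leq b$ is satisfied: $s$ is a positive integer, hence $s\geq 1$, and $r\geq s$ by assumption, so $1\leq s\leq r$. The second (normalized) form of Lemma~\ref{lem:power-means} then gives directly
\[\left(\frac{1}{n}\sum_{v\in V(G)} d(v)^s\right)^{1/s}\leq \left(\frac{1}{n}\sum_{v\in V(G)} d(v)^r\right)^{1/r}.\]

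Finally, multiplying both sides of this inequality by $1/n$ and substituting back the expressions from the first step yields $p_s(G)\leq p_r(G)$, which is the claim. There is no real obstacle in this argument; the only thing that needs a moment's attention is verifying that the exponents meet the hypothesis of Lemma~\ref{lem:power-means}, which is immediate since positive integers are at least $1$.
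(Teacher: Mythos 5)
Your proof is correct and is exactly the argument the paper intends: it states Lemma~\ref{lem:monotone} as an immediate consequence of Lemma~\ref{lem:r-norm} and Lemma~\ref{lem:power-means}, which is precisely your combination with $x_v=d(v)$, $a=s$, $b=r$. Nothing is missing.
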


\section{Nested goodness Lemma} \label{sec:goodness}

Given a set $W$ and a sequence $S$ of elements of $W$,
we call $S$ a sequence in $W$ for brevity. The length of
$S$ is defined to be number of elements in the sequence $S$
(multiplicity counted) and is denoted by $|S|$. Given a positive integer $k$,
we let $W^k$ denote the set of sequences of length $k$ in $W$ and
we let $W_k$ denote the set of sequences of length $k$ in $W$ in which
the $k$ elements are all different.
Given a graph $G$ and a sequence $S$ in $V(G)$, the {\it common neighborhood} $N(S)$ is the set of vertices adjacent
to every vertex in $S$.

We now introduce a goodness notion that is inspired by Lemma 2.1 of \cite{CFS}.
A more specialized version of it was introduced in \cite{JN}.

\begin{definition}[$i$-good sequences] \label{def:good}
Let $0<\alpha,\beta<1$ be reals. Let $h,r$ be positive integers.
Let $G$ be an $n$-vertex graph.
Let $p=p_r(G)=t_{K_{1,r}}(G)^{1/r}$. For each $0\leq i\leq h$,
we define an $i$-good sequence in $V(G)$ relative to $(\al,\beta,h,r)$ (or simply {\it $i$-good} in short) 
as follows. We say that a sequence $T$ in $V(G)$ is {\it $0$-good} if
$|N(T)|\geq \alpha p^{|T|} n$.
For all $1\leq i\leq h$, we say that a sequence $S$ of length at most $h$ in $V(G)$ is {\it $i$-good} if $S$ is $0$-good and for each $|S|\leq k\leq h$, the number of $(i-1)$-good sequences of length $k$ in $N(S)$ is at least
$(1-\beta) |N(S)|^k$.
\end{definition}

Below is our main theorem on the goodness notion.

\begin{theorem}[Nested goodness lemma]\label{thm:universal}
Let $h\geq r$ be positive integers. Let $0<\beta<1$ be a real.
There is a positive real $\al$ depending on $h,r$ and $\beta$ such that the following is true. 
Let $G$ be any graph on $n$ vertices.
Let $p=p_r(G)=t_{K_{1,r}}(G)^{1/r}$. For any $i,j\in [h]$, 
let $\A_{i,j}$ denote the set of $i$-good sequences of length $j$ relative to
$(\al,\beta, h,r)$ in $V(G)$. Then for each $i\in [h]$ and $r\leq j\leq h$
\[\sum_{S\in\A_{i,j}} |N(S)|^r\geq (1-\beta) n^{j+r} p^{jr}.\]
In particular, there exists an $i$-good sequence $S$ of size $j$  such that 
$|N(S)|\geq (1-\beta)^{1/r}p^j n$.
\end{theorem}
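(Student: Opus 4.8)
### Proof proposal

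The plan is to prove the displayed inequality $\sum_{S\in\A_{i,j}} |N(S)|^r\geq (1-\beta) n^{j+r} p^{jr}$ by induction on $i$, and then extract the final ``in particular'' statement by a simple averaging argument. The choice of $\alpha$ will be made at the end, as a (rapidly shrinking) function of $h$, $r$, $\beta$ obtained by iterating the base-case estimate $h$ times.

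\medskip

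\textbf{Base case $i=1$ (and the underlying $i=0$ estimate).} First I would record the ``raw'' moment inequality that makes everything go: for any $r\le j\le h$,
\[
\sum_{S\in V(G)^j}|N(S)|^r \;=\; \sum_{v\in V(G)} d(v)^{\,j r}\Big/\text{(something)}
\]
— more precisely, $\sum_{S\in V(G)^j}|N(S)|^r$ counts pairs (a sequence $S$ of length $j$, a sequence $T$ of length $r$) with every vertex of $T$ adjacent to every vertex of $S$, which equals $\sum_{T\in V(G)^r}|N(T)|^j = h_{K_{1,?}}$-type quantity; using Lemma~\ref{lem:r-norm} and Lemma~\ref{lem:power-means} (power means with exponents $r\le j$, or $r\le jr$) this is at least $n^{j+r}p^{jr}$. [The exact bookkeeping here is the one genuinely computational point; it is the statement that the ``$r$-th moment of $j$-fold common neighbourhoods'' is at least what it is in the pseudo-random graph of $r$-norm density $p$.] Then the $0$-good sequences are exactly those $S$ with $|N(S)|\ge \alpha p^{|S|}n$; the contribution of the $S$ that fail this is at most $n^j\cdot(\alpha p^j n)^r=\alpha^r n^{j+r}p^{jr}$, so the $0$-good sequences of length $j$ already satisfy $\sum|N(S)|^r\ge (1-\alpha^r)n^{j+r}p^{jr}$. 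For the $1$-good case I need, in addition, that for almost every $0$-good $S$, almost every length-$k$ sequence inside $N(S)$ is $0$-good. Fix $S$ that is $0$-good; a length-$k$ sequence $U$ in $N(S)$ that is \emph{not} $0$-good has $|N(U)|<\alpha p^k n$. Counting triples (a $0$-good $S$ of length $j$, a bad $U\in N(S)^k$, a $T\in N(U)^r$) and comparing against the global moment bound, a Markov/double-counting argument shows the $0$-good $S$ for which more than a $\beta$-fraction of length-$k$ sequences in $N(S)$ are bad contribute a negligible $\sum|N(S)|^r$, provided $\alpha$ is small. Taking a union bound over the $O(h)$ values of $k$ gives the $i=1$ bound with $1-\beta$ on the right.

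\medskip

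\textbf{Inductive step.} Assume the bound holds for $i-1$ (with some constant $\alpha_{i-1}$). The $i$-good sequences are the $0$-good $S$ such that for each $k$, at least a $(1-\beta)$-fraction of length-$k$ sequences in $N(S)$ are $(i-1)$-good. The key observation is that the inductive hypothesis, applied ``inside $N(S)$'', is exactly the right statement to feed the same Markov argument as in the base case: for a $0$-good $S$, the induced graph $G[N(S)]$ has $\ge \alpha p^{|S|}n$ vertices, and one compares $\sum_{U\in N(S)^k, \ U \text{ not }(i-1)\text{-good}} |N(U)|^r$ against the global moment. The slightly delicate point is that $(i-1)$-goodness of $U$ as a sequence in $N(S)$ must be matched with $(i-1)$-goodness as a sequence in $V(G)$ — but here the definition is stated \emph{absolutely} (goodness is a property of the sequence in $V(G)$), and $N_{G[N(S)]}(U)\subseteq N_G(U)$, so being $(i-1)$-good in the global sense is what one needs and it is inherited. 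So the same double-count, now using the induction hypothesis as the replacement for the base moment bound, shows that all but a $\beta$-fraction (in the $\sum|N(S)|^r$-weighted sense) of $0$-good $S$ are in fact $i$-good. Choosing $\alpha$ small enough that all $h$ rounds of the ``lose a bit'' estimate together cost less than $\beta$ total — i.e. $\alpha = \alpha(h,r,\beta)$ defined by an $h$-fold recursion $\alpha_i \mapsto \alpha_{i-1}$ with $\alpha_0$ chosen from $\beta/h$ — closes the induction.

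\medskip

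\textbf{Extracting the last sentence.} Once $\sum_{S\in\A_{i,j}}|N(S)|^r\ge(1-\beta)n^{j+r}p^{jr}$ is proved, suppose for contradiction every $i$-good sequence $S$ of length $j$ has $|N(S)|<(1-\beta)^{1/r}p^j n$. There are at most $n^j$ such sequences, so $\sum_{S\in\A_{i,j}}|N(S)|^r< n^j\cdot(1-\beta)p^{jr}n^r=(1-\beta)n^{j+r}p^{jr}$, contradicting the bound. Hence some $i$-good $S$ of size $j$ has $|N(S)|\ge(1-\beta)^{1/r}p^j n$.

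\medskip

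\textbf{Main obstacle.} The conceptual content is mild once the right moment identity is in hand; the real work is organizing the constants so the induction is clean. The step I expect to be most error-prone is the inductive step's double-counting inequality — making sure that ``bad inside $N(S)$'' is correctly bounded using the induction hypothesis applied to $G$ (not to $G[N(S)]$, where the $r$-norm density could be different), and that the union bound over the $O(h)$ choices of $k$ and the geometric loss over the $h$ levels of nesting are both absorbed into a single $\alpha$ depending only on $h,r,\beta$. Everything else is routine.
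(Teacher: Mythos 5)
Your overall strategy is the same as the paper's: induct on the nesting level $i$, control the contribution of the non-good (``bad'') sequences by a double count against the bad sequences one level down, fix exponents with the power-means inequality, choose $\al$ through an $h$-fold recursion, and finish with the averaging step (your last paragraph matches the paper's deduction exactly, as does your base ``raw moment'' identity $\sum_{S\in V(G)^j}|N(S)|^r=\sum_{T\in V(G)^r}|N(T)|^j\ge n^{j+r}p^{jr}$, which is the paper's Lemma~\ref{lem:total-sequence}).

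However, there is a genuine gap in the inductive step as you have set it up: your induction hypothesis only tracks the $r$-th moment, and that is not enough. In the double count, a fixed bad sequence $U$ of length $k$ lies in $N(S)^k$ for at most $|N(U)|^{j}$ sequences $S$ of length $j$ (every vertex of $S$ must lie in $N(U)$), so the quantity you must bound from above at level $i-1$ is $\sum_{U\ \mathrm{bad},\,|U|=k}|N(U)|^{j}$ with $j$ ranging up to $k$, not $\sum|N(U)|^{r}$; your triple-count variant does not help either, since the bad $U$ have small $|N(U)|$ and give no usable lower bound. Moreover, a lower bound on the good-side $r$-th moment does not convert into an upper bound on the bad-side $j$-th moment, because there is no matching upper bound on the total $j$-th moment in terms of $p_r$ (graphs with very skewed degrees have much larger high moments). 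The fix is to strengthen the statement being inducted on: prove, for every length $j$ and \emph{every} exponent $\ell\le j$, the complement bound $\sum_{S\ \mathrm{bad},\,|S|=j}|N(S)|^{\ell}\le \al_i\, n^{j+\ell}p^{j\ell}$, use the exponent-$j$ case at level $i-1$ in the double count to bound $\sum_{S\in\C}|N(S)|^{k}$, and then come back down to exponent $\ell$ via power means together with $|\C|\le n^j$. This is exactly the paper's Lemma~\ref{lem:goodness2}, from which the theorem follows by subtracting from the total-moment lower bound. So the idea is right and repairable, but the induction hypothesis you wrote down would not close the argument as stated.
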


Applying Theorem~\ref{thm:universal} with $r=1$, we get the following theorem that
is of independent interest.

\begin{theorem}\label{thm:goodness}
Let $h$ be a positive integer. Let $0<\beta<1$ be a real.
There is a positive real $\al$ depending on $h$ and $\beta$ such that the following is true. 
Let $G$ be any graph on $n$ vertices.
Let $p=\frac{2e(G)}{n^2}$. For any $i,j\in [h]$, 
let $\A_{i,j}$ denote the set of $i$-good sequences of length $j$ relative to
$(\al,\beta, h,1)$ in $V(G)$. Then, for all $i \in [h]$
\[\sum_{S\in\A_{i,j}} |N(S)|\geq (1-\beta) n^{j+1} p^j.\]
In particular, there exists an $i$-good sequence $S$ of size $j$  such that 
$|N(S)|\geq (1-\beta)p^j n$.
\end{theorem}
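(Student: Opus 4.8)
The goal is to prove Theorem~\ref{thm:goodness}, which is precisely Theorem~\ref{thm:universal} specialized to $r=1$. Since Theorem~\ref{thm:universal} is stated earlier in the excerpt and I am allowed to assume it, the proof is a direct substitution: apply Theorem~\ref{thm:universal} with the parameter $r$ set to $1$. One must first check that the hypotheses are met. Theorem~\ref{thm:universal} requires $h\geq r$; with $r=1$ this is the condition $h\geq 1$, which holds since $h$ is a positive integer. It also requires $0<\beta<1$, which we have. So Theorem~\ref{thm:universal} supplies a positive real $\al=\al(h,1,\beta)$, which we simply take to be our $\al$ depending on $h$ and $\beta$.

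\textbf{Key steps in order.} First, observe that $p_1(G)=t_{K_{1,1}}(G)^{1/1}=t_{K_2}(G)=\frac{2e(G)}{n^2}$, by the remark following the definition of $r$-norm density (or directly by Lemma~\ref{lem:r-norm} with $r=1$, which gives $p_1(G)=\frac1n\cdot\frac1n\sum_v d(v)=\frac{2e(G)}{n^2}$). Hence the quantity $p=\frac{2e(G)}{n^2}$ appearing in Theorem~\ref{thm:goodness} is exactly the quantity $p=p_r(G)$ appearing in Theorem~\ref{thm:universal} when $r=1$. Second, the notion of ``$i$-good sequence relative to $(\al,\beta,h,1)$'' in the statement of Theorem~\ref{thm:goodness} is, by definition, the same object as the ``$i$-good sequence relative to $(\al,\beta,h,r)$'' in Theorem~\ref{thm:universal} with $r=1$, so the sets $\A_{i,j}$ coincide. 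Third, for $r=1$ and any $1\leq j\leq h$ we have $r\leq j\leq h$, so the conclusion of Theorem~\ref{thm:universal} applies for every $j\in[h]$ and reads
\[\sum_{S\in\A_{i,j}}|N(S)|^1\geq (1-\beta)\,n^{j+1}\,p^{j\cdot 1},\]
which is exactly the displayed inequality of Theorem~\ref{thm:goodness}. Finally, the ``in particular'' clause: the ``in particular'' clause of Theorem~\ref{thm:universal} with $r=1$ states there is an $i$-good sequence $S$ of size $j$ with $|N(S)|\geq(1-\beta)^{1/1}p^j n=(1-\beta)p^j n$, which is exactly the ``in particular'' clause of Theorem~\ref{thm:goodness}.

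\textbf{Main obstacle.} There is essentially no obstacle here, as the statement is a literal specialization. The only thing to be careful about is the bookkeeping of which parameter in Definition~\ref{def:good} plays the role of $r$: the goodness notion depends on $r$ only through $p=p_r(G)$ in the $0$-good condition $|N(T)|\geq\al p^{|T|}n$, and setting $r=1$ there simply replaces $p_r(G)$ by $p_1(G)=\frac{2e(G)}{n^2}$, consistently with the statement of Theorem~\ref{thm:goodness}. Thus the proof is a single sentence invoking Theorem~\ref{thm:universal} with $r=1$, after noting $p_1(G)=\frac{2e(G)}{n^2}$ and that $r\leq j$ holds automatically for all $j\in[h]$ when $r=1$.
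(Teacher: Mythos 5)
Your proof is correct and matches the paper exactly: the paper introduces Theorem~\ref{thm:goodness} with the single sentence ``Applying Theorem~\ref{thm:universal} with $r=1$, we get the following theorem,'' and your careful bookkeeping (that $p_1(G)=\frac{2e(G)}{n^2}$, that the goodness notion coincides, and that $r\leq j\leq h$ becomes $1\leq j\leq h$) just spells out that specialization.
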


Loosely speaking, one may think of the usual dependent random choice lemma as saying that for any 
positive integers $j, h$ and real $0<\beta<1$,
there is a $1$-good sequence $S$ of size $j$ relative to $(\al,\beta,h,1)$ 
for some appropriate $\al>0$ such that  most of the subsets $T$ in $N(S)$
of size at most $h$ have their common neighborhood fractionally as large as 
expected in the random graph of
the same edge-density. In that regard, one may view
Theorem~\ref{thm:goodness} as a strengthening of the dependent random choice lemma
to a stronger notion of goodness.
Theorem~\ref{thm:universal} follows from the following more technical lemma.

\begin{lemma} \label{lem:goodness2}
Let $h\geq r$ be positive integers. Let $0<\beta<1$ be a real.
There exists a positive real $\alpha$ depending on $h,r$ and $\beta$ such that
the following is true. Let $G$ be a graph on $n$ vertices. 
Let $G$ be any graph on $n$ vertices.
Let $p=p_r(G)=t_{K_{1,r}}(G)^{1/r}$. 
For each $0\leq i\leq h$ and $1\leq j\leq h$,
let $\A_{i,j}$ denote the set of $i$-good sequences of length $j$ relative to
$(\al,\beta, h,r)$ in $V(G)$
and let $\B_{i,j}=[V(G)]^j\setminus \A_{i,j}$.
Then for each $0\leq i\leq h, 1\leq j  \leq h$ and $1\leq \ell\leq j$, 
\[\sum_{S\in \B_{i,j}} |N(S)|^\ell \leq \beta  n^{j+\ell}p^{j\ell}.\]
\end{lemma}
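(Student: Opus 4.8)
The plan is to induct on $i$. The base case $i=0$ is the statement that for any $0 \le j \le h$ and $1 \le \ell \le j$, the sequences $S$ of length $j$ that are \emph{not} $0$-good contribute little to $\sum |N(S)|^\ell$. Here I would proceed in two stages: first show that the ``full sum'' $\sum_{S \in [V(G)]^j} |N(S)|^\ell$ is close to $n^{j+\ell}p^{j\ell}$ from below by a convexity argument, and then separately bound the contribution of the bad sequences from above. For the full sum, note that $\sum_{S \in [V(G)]^j} |N(S)|^\ell = \sum_{S}\sum_{T}\mathbbm{1}[T \subseteq N(S)] = \sum_{T \in [V(G)]^\ell}|N(T)|^j$ by swapping the roles of the two sequences (here $N(T)$ for a length-$\ell$ sequence). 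Applying Lemma~\ref{lem:power-means} with exponents $\ell \le j$ (when $\ell \le r$, or more generally after a further application comparing with the $r$-th moment via $p = p_r$), together with $\sum_v d(v)^r = n^{r+1}p^r$, should yield $\sum_{S \in [V(G)]^j}|N(S)|^\ell \ge n^{j+\ell}p^{j\ell}$. For the bad contribution, the definition of $0$-good says $|N(S)| \ge \alpha p^{|S|} n$ fails, i.e. $|N(S)| < \alpha p^j n$ for $S \in \B_{0,j}$; so $\sum_{S \in \B_{0,j}} |N(S)|^\ell \le (\alpha p^j n)^{\ell - 1} \sum_{S \in \B_{0,j}} |N(S)| \le (\alpha p^j n)^{\ell-1} \cdot n^j \cdot \alpha p^j n = \alpha^\ell n^{j+\ell} p^{j\ell}$, which is $\le \beta n^{j+\ell} p^{j\ell}$ once $\alpha \le \beta^{1/h}$. (A small amount of care is needed to combine this with the lower bound on the full sum if one instead wants to bound $\sum_{S \in \A_{0,j}}$; but directly it is cleaner, and the full-sum estimate is what drives the inductive step.)

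For the inductive step, assume the conclusion holds for $i-1$ (all $j,\ell$), and fix $j,\ell$ with $1 \le \ell \le j \le h$; we bound $\sum_{S \in \B_{i,j}} |N(S)|^\ell$. Split $\B_{i,j}$ into those $S$ that are not $0$-good, handled exactly as in the base case (contributing at most, say, $\tfrac{\beta}{2} n^{j+\ell}p^{j\ell}$ after shrinking $\alpha$), and those $S$ that are $0$-good but fail the nested condition: for such $S$ there is some $k$ with $|S| \le k \le h$ for which the number of $(i-1)$-good length-$k$ sequences in $N(S)$ is less than $(1-\beta)|N(S)|^k$, equivalently the number of \emph{non-}$(i-1)$-good length-$k$ sequences in $N(S)$ exceeds $\beta |N(S)|^k$. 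I would estimate the contribution of these $S$ by double counting: for a fixed $k$, the number of pairs $(S, R)$ with $S$ in this part, $R \in \B_{i-1,k}$, and $R \subseteq N(S)$, is at least $\beta |N(S)|^k$ for each such $S$; on the other hand, summing over $R \in \B_{i-1,k}$ first, for each $R$ the sequences $S$ with $R \subseteq N(S)$ are precisely the length-$j$ sequences in $N(R)$, and we want a weighted count with weight $|N(S)|^\ell$. The mechanism I expect to need is: reverse the order of summation to reduce $\sum_S |N(S)|^\ell \cdot \mathbbm{1}[R \subseteq N(S)]$ to something like $\sum_S \mathbbm{1}[S \subseteq N(R)] \cdot |N(S)|^\ell$, then bound $|N(S)| \le |N(R \cup \{s\})|$-type quantities and re-expand, ultimately landing back on a sum of the form $\sum_{R \in \B_{i-1,k}} |N(R)|^{(\text{something})}$ to which the induction hypothesis at level $i-1$ applies.

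Carrying this out, one obtains (after choosing the various auxiliary exponents so that they lie in $[1,h]$ and applying Lemma~\ref{lem:power-means} to convert between moments) a bound of the form $\beta \cdot (\text{bounded factor}) \cdot n^{j+\ell}p^{j\ell}$ for each of the $\le h$ values of $k$, and summing over $k$ and over the $i\le h$ levels, choosing $\alpha$ small enough that the accumulated constant stays below $1$, gives the claimed $\beta n^{j+\ell}p^{j\ell}$. The main obstacle I anticipate is precisely this double-counting/moment-swapping step in the inductive case: one must track how the exponent on $|N(\cdot)|$ transforms when passing from $S$ to the parent sequence $R$, ensure at every stage that exponents remain in the range $[1,h]$ where Lemma~\ref{lem:power-means} is usable, and make sure the loss at each of the $h$ levels is only a constant factor rather than something that compounds — this is what forces $\alpha$ to depend on $h$, $r$, and $\beta$, and it is the delicate bookkeeping here (rather than any single clever idea) that constitutes the real work. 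The role of the hypothesis $h \ge r$ and of using $p = p_r(G)$ throughout is to guarantee that the base-case convexity step comparing the $\ell$-th moment of neighborhood sizes to $p^{j\ell}$ actually goes through, since $p_r$ is exactly the normalization for which $\sum_v d(v)^r$ has the right size.
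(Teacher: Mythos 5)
Your skeleton is the paper's: induct on $i$, dispose of the non-$0$-good sequences directly from the definition, and in the inductive step classify each remaining $S\in\B_{i,j}$ by a length $k$ with $j\le k\le h$ at which the nested condition fails, then double-count pairs $(S,T)$ with $T$ (your $R$) a non-$(i-1)$-good sequence of length $k$ in $N(S)$, using that such $S$ lie in $N(T)$, so the number of pairs is at most $\sum_{T\in\B_{i-1,k}}|N(T)|^j$, to which the induction hypothesis (exponent $j\le k$) applies. However, the mechanism you say you ``expect to need'' for a \emph{weighted} version of this count is where the sketch would break: there is no usable inequality of the form $|N(S)|\le|N(T\cup\{s\})|$ (adding vertices to $T$ only shrinks its common neighborhood, and $T\subseteq N(S)$ gives no comparison between $|N(S)|$ and $|N(T)|$), and the induction hypothesis says nothing about sums of $|N(S)|^\ell$ over $S\subseteq N(T)$. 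The weighted count is also unnecessary. The unweighted count you already wrote down gives $\beta\sum_{S\in\C^k_{i,j}}|N(S)|^k\le\sum_{T\in\B_{i-1,k}}|N(T)|^j$, i.e.\ a bound on the $k$-th moment over $\C^k_{i,j}$; one then passes to the $\ell$-th moment by a single application of Lemma~\ref{lem:power-means} with $a=\ell$, $b=k$, together with $|\C^k_{i,j}|\le n^j$. This is exactly how the paper proceeds. (Your base-case detour through a lower bound on $\sum_{S\in[V(G)]^j}|N(S)|^\ell$ is likewise not part of this lemma; that lower bound enters only later, via Lemma~\ref{lem:total-sequence}, when the present lemma is converted into Theorem~\ref{thm:universal}.)

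One further point must be explicit for the induction to close: the statement you induct on has to be quantitatively stronger than the lemma itself. If at level $i-1$ you only know $\sum_{T\in\B_{i-1,k}}|N(T)|^j\le\beta n^{j+k}p^{jk}$, the double count yields merely $\sum_{S\in\C^k_{i,j}}|N(S)|^k\le n^{j+k}p^{jk}$, and power means then gives about $n^{j+\ell}p^{j\ell}$ for each value of $k$, which is useless after summing over $k$. The paper instead proves $\sum_{S\in\B_{i,j}}|N(S)|^\ell\le\alpha_i n^{j+\ell}p^{j\ell}$ with $\alpha_0=\alpha$ and $\alpha_i=\alpha+h(\alpha_{i-1}/\beta)^{1/h}$, choosing $\alpha=\alpha(h,r,\beta)$ at the outset so small that $\alpha_h<\beta$; the per-level loss (divide by $\beta$, raise the constant to the power $\ell/k\ge 1/h$, multiply by $h$) does compound, but only through $h$ levels, and the recursion absorbs it. Your closing remark about choosing $\alpha$ so that ``the accumulated constant stays below $1$'' gestures at this, but the strengthened inductive hypothesis must be stated and carried; with that and the moment-conversion step above, your outline becomes the paper's proof.
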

\begin{proof}
Suppose $\al$ has been specified, we define a sequence $\al_i$, $0\leq i\leq h$, by letting $\al_0=\alpha$ and
$\al_i=\al+h(\al_{i-1}/\beta)^{1/h}$ for each $i\in [h]$. For fixed $h$ and $\beta$, it is easy to 
see that by choosing $\al$ to be small enough, we can ensure that $\al_i$ in increasing in $i$
and $\al_h<\beta$. Let us fix such an $\al$. Now, let $\A_{i,j}$ and $\B_{i,j}$ be defined as stated.
We use induction on $i$ to prove that 
for all $0\leq i  \leq h, j\in [h]$, and $1\leq \ell\leq j$
\[\sum_{S\in \B_{i,j}} |N(S)|^\ell \leq \al_i n^{j+\ell}p^{j\ell}.\]
For the basis step, let $i=0$.
Let $j,\ell\in [h]$ where $\ell\leq j$.
By definition, 
\begin{equation} \label{eq:Dj-bound}
\sum_{S\in \B_{0,j}} |N(S)|^\ell \leq n^j (\alpha p^jn)^\ell\leq \alpha n^{j+\ell}p^{j\ell}\leq \al_0  n^{j+\ell}p^{j\ell}.
\end{equation}
Hence the claim holds for $i=0$.
For the induction step, let $i\geq 1$ and suppose the claims hold when $i$ is replaced with $i-1$. Let $j\in [h]$.
For each $j\leq k\leq h$, let $\C^k_{i,j}$ denote
the set of sequences $S$ in $\B_{i,j}$ such that the number of
sequences of length $k$ in $N(S)$ that are not $(i-1)$-good is at least $\beta|N(S)|^k$.
By definition, $\B_{i,j}=\B_{0,j}\cup \bigcup_{k=j}^h \C^k_{i,j}$.
Let  $\F_k$ be the collection of pairs $(S,T)$, where $S\in \C^k_{i,j}$
and $T$ is a sequence of length $k$ in $N(S)$ that is not $(i-1)$-good. By our definition,
\[|\F_k|\geq \sum_{S\in \C^k_{i,j}} \beta |N(S)|^k=\beta \cdot \sum_{S\in \C^k_{i,j}} |N(S)|^k.\]
On the other hand, for each sequence $T$ of length $k$ in $V(G)$ that is
not $(i-1)$-good, the number of sequences $S$ of length $j$ that satisfy
$(S,T)\in \F_k$ is most $|N(T)|^j$. Hence,
\[|\F_k|\leq \sum_{T\in \B_{i-1,k}} |N(T)|^j\leq\al_{i-1} n^{j+k}p^{jk},\]
where the last inequality follows from the induction hypothesis.
Combining the lower and upper bounds on $|\F_k|$, we get
\begin{equation} \label{eq:Ckij}
   \sum_{S\in \C^k_{i,j}} |N(S)|^k\leq (\al_{i-1}/\beta) n^{j+k} p^{jk}. 
\end{equation}
Let $\ell\in [h]$ such that $\ell\leq j$. Since $j\leq k$,  we have $\ell\leq k$.
Applying Lemma~\ref{lem:power-means} with $a=\ell,b=k$ and using $|\C^k_{i,j}|\leq n^j$, we
get
\[\sum_{S\in \C^k_{i,j}} |N(S)|^\ell\leq (n^j)^{1-\ell/k}(\al_{i-1}/\beta)^{\ell/k} (n^{j+k}p^{jk})^{\ell/k}
\leq (\al_{i-1}/\beta)^{1/h} n^{j+\ell}p^{j\ell},\]
where we used the fact that $\al_{i-1}/\beta<1$.
By ~\eqref{eq:Dj-bound} and ~\eqref{eq:Ckij}, we have
\[\sum_{S\in \B_{i,j}} |N(S)|^\ell\leq \sum_{S\in \B_{0,j}} |N(S)|^\ell+\sum_{k=j}^h \sum_{S\in \C^k_{i,j}} |N(S)|^\ell 
\leq [\alpha+ h(\al_{i-1}/\beta)^{1/h}] n^{j+\ell} p^{j\ell}\leq \al_i n^{j+\ell} p^{j\ell} .\] 
This completes the induction and the proof.
\end{proof}

We need another quick lemma. Given two positive integers $n,j$, let $n_j=n(n-1)\cdots (n-j+1)$.

\begin{lemma} \label{lem:total-sequence}
Let $G$ be a graph on $n$ vertices and $j,r$
positive integers.
Let $p=p_r(G)=t_{K_{1,r}}(G)^{1/r}$. Then
$\sum_{S\in [V(G)]^j} |N(S)|^r\geq n^{j+r}p^{jr}$.
If $p>4jn^{-1/r}$ then
$\sum_{S\in [V(G)]_j} |N(S)|^r\geq \frac{1}{2^{j+1}}n^{j+r}p^{jr}$.
\end{lemma}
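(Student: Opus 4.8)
The plan is to prove both inequalities by reducing them to statements about a single vertex and then invoking convexity (power-means) to pass from the per-vertex bound to the sum over $j$-sequences. The cleanest route is induction on $j$. For the first inequality, observe that when $j=0$ the claim reads $\sum_{S}|N(\emptyset)|^r = n^r \geq n^r p^{0} = n^r$, which is trivially true since $|N(\emptyset)|=n$ and $p\le 1$. For the inductive step, write each $S\in[V(G)]^{j}$ as $S'v$ where $S'\in[V(G)]^{j-1}$ and $v\in V(G)$; then $N(S)=N(S')\cap N(v)$, and I want to sum over $v$ first. The key local computation is that for a fixed set $U=N(S')$,
\[
\sum_{v\in V(G)} |U\cap N(v)|^r \;=\; \sum_{v\in V(G)}\Big(\sum_{u\in U}\one[uv\in E(G)]\Big)^{r}
\;\ge\; \sum_{u\in U}\sum_{v} \one[uv\in E(G)]\cdot(\text{something}),
\]
but the clean way is: $\sum_{v} |U\cap N(v)|^r = \sum_{v}\big(\sum_{u\in U}\one[uv\in E]\big)^r \ge |U|^{-(r-1)}\big(\sum_v\sum_{u\in U}\one[uv\in E]\big)^r$ is the wrong direction; instead expand and count, or — more simply — use that $\sum_{v}|U\cap N(v)|^r$ counts homomorphisms of $K_{1,r}$ with the $r$ leaves landing in $U$ and the center anywhere, which by convexity over the center is at least $n\cdot(e_U/n)^r$ where... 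I will instead follow the standard dependent-random-choice computation: $\sum_{v\in V(G)}|U\cap N(v)|^r \ge |U|^r\, p^r$ is what one wants when $U=V(G)$ (that is exactly $h_{K_{1,r}}(G)=n^{r+1}p^r$, i.e. Lemma~\ref{lem:r-norm}), and for general $U$ one applies the same identity to the induced counting. Concretely, $\sum_v |U\cap N(v)|^r = \sum_{(u_1,\dots,u_r)\in U^r} |N(\{u_1,\dots,u_r\})|$, and then I push the induction through by comparing with $\sum_{(u_1,\dots,u_r)\in V(G)^r}|N(\cdot)|$. The honest statement to induct on is exactly the displayed inequality $\sum_{S\in[V(G)]^j}|N(S)|^r \ge n^{j+r}p^{jr}$; the passage $j-1\to j$ uses $\sum_{v}|U\cap N(v)|^r$ and a single application of convexity (Lemma~\ref{lem:power-means} or Jensen) in the right place, together with the base identity $\sum_{v\in V(G)} d(v)^r = h_{K_{1,r}}(G) = n^{r+1}p^r = n\cdot(np)^r$.

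For the second inequality, the idea is that the distinct-coordinate sequences $[V(G)]_j$ differ from all sequences $[V(G)]^j$ only by the "diagonal" terms in which some coordinate repeats; I want to show these diagonal terms contribute at most a $(1-2^{-(j+1)})$-fraction under the hypothesis $p>4jn^{-1/r}$, which forces $np>4j$, i.e. the "average degree scale" $np$ is large compared to $j$. Write $\sum_{S\in[V(G)]^j}|N(S)|^r = \sum_{S\in[V(G)]_j}|N(S)|^r + \sum_{S:\text{ some repeat}}|N(S)|^r$. Each sequence with a repeat is obtained by taking a sequence of length $j-1$ and duplicating one of its entries (in one of $\binom{j}{2}\le j^2$ positions, crudely), and $|N(S)|\le |N(S'')|$ where $S''$ is the length-$(j-1)$ subsequence; hence by the first inequality applied at length $j-1$... no — I need an \emph{upper} bound here, so instead I bound $\sum_{S:\text{repeat}}|N(S)|^r \le j^2 \sum_{S''\in[V(G)]^{j-1}}|N(S'')|^r$ and then I need an \emph{upper} bound on that sum, which does not follow from Lemma~\ref{lem:total-sequence} alone. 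The fix is to bound $|N(S'')|^r \le n^{r-1}|N(S'')|\cdot\max$... The standard trick: $\sum_{S\in[V(G)]^j}|N(S)|^r$ with one coordinate free and equal to a previous coordinate $u$ contributes $\sum_{(\dots,u,\dots)} |N(S)|^r$ where one factor is "$u\in N(S)$ automatically"; so such terms are at most $\sum_{S'\in[V(G)]^{j-1},\, u\in N(S')} |N(S')|^{r}$. Summing this bound in a telescoping way and comparing to $n^{j+r}p^{jr}$, I will show the total repeat-contribution is at most $\frac{j^2}{np}\cdot n^{j+r}p^{jr}\cdot(\text{const})$, and since $np>4j$ this is $\le \frac{1}{2}n^{j+r}p^{jr}$ once one iterates over the $j$ possible repeats and uses $\sum_{k\ge 1}(4j/(np))^{k}<1/2$-type geometric control, giving the claimed $2^{-(j+1)}$ factor after being slightly generous with constants.

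The main obstacle I anticipate is the second inequality: getting a clean \emph{upper} bound on the contribution of repeated-coordinate sequences requires relating $\sum |N(S)|^r$ for shorter sequences to the target $n^{j+r}p^{jr}$ in the upper direction, which the first inequality does not directly give. I expect to handle this by a careful induction on $j$ that simultaneously tracks $\sum_{S\in[V(G)]_j}|N(S)|^r$ from below and controls the "defect" $\sum_{S\in[V(G)]^j\setminus[V(G)]_j}|N(S)|^r$ from above, using at each step the elementary bound $|N(S)|\le |N(S')|$ for subsequences $S'\subseteq S$, the identity $\sum_{v}|U\cap N(v)|^r \le |U|\cdot(\text{max degree into }U)^{r}$ replaced by the convexity-sharp $\le$ analog, and the hypothesis $np>4j$ to make a geometric series converge. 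The first inequality, by contrast, should be essentially immediate from Lemma~\ref{lem:r-norm} plus one convexity step, so I would present it first and in full, then spend the bulk of the argument on the second.
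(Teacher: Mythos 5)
The first inequality is fine in spirit, but the induction you build around it is both unnecessary and, as framed, does not close: the one-step bound you get from convexity over the new vertex $v$ is $\sum_v|N(S')\cap N(v)|^r\ge n^{1-r}\bigl(\sum_{u\in N(S')}d(u)\bigr)^r$, which involves the degree-weighted size of $N(S')$ rather than $|N(S')|$, so the inductive hypothesis $\sum_{S'}|N(S')|^r\ge n^{j-1+r}p^{(j-1)r}$ is not strong enough to continue; and your alternative of ``comparing with $\sum_{T\in V(G)^r}|N(T)|$'' goes in the wrong direction, as you yourself note. The identity you already wrote down, summed over all $S'$, gives the global swap $\sum_{S\in[V(G)]^j}|N(S)|^r=\sum_{T\in[V(G)]^r}|N(T)|^j$ directly, and then a single application of Jensen together with $\sum_T|N(T)|=h_{K_{1,r}}(G)=n^{r+1}p^r$ finishes; this is exactly the paper's (non-inductive) proof of the first statement.

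The second inequality is where the proposal genuinely fails. You plan to bound the repeated-coordinate contribution above by roughly $\frac{j^2}{np}\,n^{j+r}p^{jr}$ and subtract, but no bound of this shape holds: both $\sum_{S''\in[V(G)]^{j-1}}|N(S'')|^r$ and the repeat contribution itself can exceed $n^{j+r}p^{jr}$ by an unbounded factor even under the hypothesis $p>4jn^{-1/r}$. Concretely, take $r=1$, $j=3$, and $G$ a clique on $m=\lceil C\sqrt n\rceil$ vertices (large constant $C$) plus isolated vertices: then $p\approx C^2/n>12/n$, so the hypothesis holds, and $n^{j+r}p^{jr}=n^4p^3\approx C^6n$, while already the length-$3$ sequences inside the clique with one repeated coordinate contribute about $3m^3\approx 3C^3n^{3/2}\gg C^6n$. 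So the repeat mass is not a small multiple of $n^{j+r}p^{jr}$; the lemma survives only because in such graphs the distinct part is also much larger than $n^{j+r}p^{jr}$, and a subtraction argument would have to compare repeats against the true total $\sum_T|N(T)|^j$, a harder estimate than anything in your sketch. The paper avoids this entirely by doing the distinctness bookkeeping on the other side of the double count: $\sum_{S\in[V(G)]_j}|N(S)|^r=\sum_{T\in[V(G)]^r}|N(T)|_j$ (falling factorial). The terms with $|N(T)|<2j$ contribute at most $n^r(2j)^j\le\frac12 n^{j+r}p^{jr}$ to $\sum_T|N(T)|^j$ --- this is the only place the hypothesis $p>4jn^{-1/r}$ is used, via $n^jp^{jr}>(4j)^j$ --- and for $|N(T)|\ge 2j$ one has the pointwise bound $|N(T)|_j\ge(|N(T)|/2)^j$, which yields the stated $\frac{1}{2^{j+1}}$ factor. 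You would need to replace your geometric-series step with this (or an equivalent) device for the proof to go through.
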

\begin{proof}
First, note that $\sum_{T\in [V(G)]^r} |N(T)|=h_{K_{1,r}}(G)=nt_{K_{1,r}}(G)=n^{r+1}p^r$. Hence, by convexity
\[ \sum_{T\in [V(G)]^r} |N(T)|^j
\geq n^r \left(\frac{\sum_{T\in [V(G)]^r} |N(T)|}{n^r}\right)^j
=n^r(np^r)^j =n^{j+r}p^{jr}.\]
If $p>4jn^{-1/r}$, then 
\[\sum_{T\in [V(G)]^r,|N(T)|\geq 2j} |N(T)|^j\geq n^{j+r}p^{jr}-n^r(2j)^j\geq \frac{1}{2} n^{j+r}p^{jr}. \]
Hence,
\[\sum_{T\in [V(G)]^r,|N(T)|\geq 2j} |N(T)|_j\geq 
\sum_{T\in [V(G)]^r,|N(T)|\geq 2j} (|N(T)|/2)^j\geq \frac{1}{2^{j+1}} n^{j+r}p^{jr}. \]
To prove the first statement, note that $\sum_{S\in [V(G)]^j} |N(S)|^r$ counts pairs $(S,T)$, where $S$ is
a sequence of length $j$ and $T$ is a sequence of length $r$ in $N(S)$.
By double counting, we have $\sum_{S\in [V(G)]^j} |N(S)|^r=\sum_{T\in [V(G)]^r} |N(T)|^j
\geq n^{j+r}p^{jr}$.

For the second statement, note that $\sum_{S\in [V(G)]_j} |N(S)|^r$ counts pairs
$(S,T)$, where $S$ is a sequence of length $j$ with no repetition and $T$ is sequence
of length $r$ in $N(S)$. By double counting and convexity, we have
$\sum_{S\in [V(G)]_j} |N(S)|^r =\sum_{T\in [V(G)]^r} |N(T)|_j\geq  \frac{1}{2^{j+1}} n^{j+r}p^{jr}$.
\end{proof}

Now we are ready to prove Theorem~\ref{thm:universal}.

\medskip

{\bf Proof of Theorem~\ref{thm:universal}:}
Let $h\geq r$ be positive integers and $0<\beta<1$ a real. Let $\al$ be defined as in Lemma~\ref{lem:goodness2}.
Let $i\in [h]$ and $r\leq j\leq h$. Let $\A_{i,j}$ denote the set of $i$-good sequences of length $j$ relative to
$(\al,\beta, h,r)$ in $V(G)$ and let $\B_{i,j}=[V(G)]^j\setminus \A_{i,j}$. 
By Lemma~\ref{lem:total-sequence}, $\sum_{S\in [V(G)]^j} |N(S)|^r\geq n^{j+r}p^{jr}$. 
By Lemma~\ref{lem:goodness2}, $\sum_{S\in \B_{i,j}} |N(S)|^r \leq \beta n^{j+r}p^{jr}$.
Hence, $\sum_{S\in \A_{i,j}} |N(S)|^r\geq (1-\beta) n^{j+r}p^{jr}$, as desired.
This proves the first part of the theorem.
Now, since $|\A_{i,j}|\leq n^j$, by averaging, there exists an $S\in \A_{i,j}$ such that
$|N(S)|^r\geq (1-\beta)p^{jr}$ and hence $|N(S)|\geq (1-\beta)^{1/r}p^j n$. This proves
the second part of the theorem.
\hfill $\Box$

In order to prove the second part of Theorem~\ref{thm:main}, we need the following variant of Theorem~\ref{thm:goodness}.
We omit the proof since it is almost identical to that of Theorem~\ref{thm:goodness}, except that we use the second statement of
Lemma~\ref{lem:total-sequence} instead of the first statement. 

\begin{lemma} \label{lem:goodness-distinct}
Let $h\geq r$ be positive integers. Let $0<\beta<1$ be a real.
There is a positive real $\al$ depending on $h,r$ and $\beta$ such that the following is true. 
Let $G$ be any graph on $n$ vertices.
Let $p=p_r(G)=t_{K_{1,r}}(G)^{1/r}$. For any $i,j\in [h]$, 
let $\A^*_{i,j}$ denote the set of $i$-good sequences of length $j$ relative to
$(\al,\beta, h,r)$ in $V(G)$ that has no repetition. 
If $p>4jn^{-1/r}$, then for each $i\in [h]$ and $r\leq j\leq h$
\[\sum_{S\in\A^*_{i,j}} |N(S)|^r\geq (\frac{1}{2^{j+1}}-\beta) n^{j+r} p^{jr}.\]
\end{lemma}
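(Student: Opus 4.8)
The plan is to follow the proof of Theorem~\ref{thm:universal} almost verbatim, the only change being to restrict attention to repetition-free sequences and to pay for this with the loss of a factor $2^{-(j+1)}$. First I would take $\al$ to be exactly the constant supplied by Lemma~\ref{lem:goodness2} for the parameters $h$, $r$, $\beta$ --- no further shrinking is needed, since the ``bad'' sets $\B_{i,j}$ are untouched by passing to repetition-free sequences. Then, fixing $i\in[h]$ and $r\le j\le h$, I would split $[V(G)]_j$ into $\A^*_{i,j}$ and its complement within $[V(G)]_j$, so that
\[\sum_{S\in\A^*_{i,j}}|N(S)|^r=\sum_{S\in[V(G)]_j}|N(S)|^r-\sum_{S\in[V(G)]_j\setminus\A^*_{i,j}}|N(S)|^r.\]

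For the first sum on the right, I would invoke the \emph{second} statement of Lemma~\ref{lem:total-sequence}: the hypothesis $p>4jn^{-1/r}$ is precisely what is needed there, and it gives $\sum_{S\in[V(G)]_j}|N(S)|^r\ge\frac{1}{2^{j+1}}n^{j+r}p^{jr}$. For the second sum, the key (trivial) observation is the inclusion $[V(G)]_j\setminus\A^*_{i,j}\subseteq[V(G)]^j\setminus\A_{i,j}=\B_{i,j}$: a repetition-free length-$j$ sequence that fails to be $i$-good is in particular a length-$j$ sequence that fails to be $i$-good. Hence Lemma~\ref{lem:goodness2}, applied with $\ell=r$ (which is legitimate since $1\le r\le j$), bounds $\sum_{S\in[V(G)]_j\setminus\A^*_{i,j}}|N(S)|^r\le\sum_{S\in\B_{i,j}}|N(S)|^r\le\beta n^{j+r}p^{jr}$. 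Subtracting the two estimates yields $\sum_{S\in\A^*_{i,j}}|N(S)|^r\ge\bigl(\tfrac{1}{2^{j+1}}-\beta\bigr)n^{j+r}p^{jr}$, which is the claim.

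There is no real obstacle here; the entire argument is bookkeeping on top of Lemma~\ref{lem:goodness2} and Lemma~\ref{lem:total-sequence}. The only point that deserves an explicit sentence is the role of the extra hypothesis $p>4jn^{-1/r}$: it is used solely to replace the unconditional lower bound $n^{j+r}p^{jr}$ on $\sum_{S\in[V(G)]^j}|N(S)|^r$ by the repetition-free lower bound $\frac{1}{2^{j+1}}n^{j+r}p^{jr}$, and that factor is exactly what propagates into the constant $\frac{1}{2^{j+1}}$ in the conclusion.
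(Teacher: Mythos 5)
Your proof is correct and is essentially the argument the paper intends: the authors omit the proof, saying it is the same as that of Theorem~\ref{thm:universal} except that the second statement of Lemma~\ref{lem:total-sequence} replaces the first, which is exactly your decomposition. Your one additional (correct) observation, that $[V(G)]_j\setminus\A^*_{i,j}\subseteq\B_{i,j}$ so Lemma~\ref{lem:goodness2} with $\ell=r$ applies unchanged and no new constant $\al$ is needed, is precisely the bookkeeping the paper leaves implicit.
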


\section{Proof of Theorem~\ref{thm:main}} \label{sec:mainproof}

{\bf Proof of Theorem~\ref{thm:main}:}
Let $n=|G|$ and $h=|H|$.  Let $\beta=\frac{1}{2^{h+2}}$. 
Let $\al$ be the positive constant
given by Theorem~\ref{thm:universal} for the given $h$, $\beta$, and $r$.
Let 
\[c_1=c_1(H)=(1-\beta)^{|B_1|/|B_0|}\al^{\sum_{i=2}^m |B_i|} (1-h\beta)^{m-1}.\]
Let 
\[ c_2=c_2(H)= 4h/\al, \mbox{ and } c_3=c_3(H)=c_1/2^{h^2}.\]

Suppose $H$ has root block $B_0$ and non-root blocks $B_1,\dots, B_m$ such that
$B_1$ is completely joined to its parent set $P(B_1)=B_0$ and for each $i=2,\dots, m$, $B_i$ is
completely joined to its parent set $P(B_i)$ where $P(B_i)\subseteq B_{\gamma(i)}$ for some $1\leq \gamma(i)<i$
and $|P(B_i)|\geq |P(B_{\gamma(i)})|$.
For each $i\in [m]$, let $\F_i$ denote the collection of
all the parent sets $P(B_j)$ that are contained in $B_i$.
Let $T$ be a tree with $V(T):=\{v_0,v_1,\dots, v_m\}$ and edge set
$E(T):=v_0v_1\cup \{v_iv_{\gamma(i)}: \in [m]\}$. We call $T$ the {\it auxiliary tree} for $H$. For each $i\in [m]$, define the {\it depth} of $B_i$, denoted by
$d_i$, to be the distance from $v_0$ to $v_i$ in $T$. Let $q$ denote the
maximum depth of a block. Then clearly $q\leq m\leq h-1$. 

Let $G$ be any graph.
For convenience, we say that a sequence in $V(G)$ is $i$-good if it is $i$-good relative to $(\al,\beta,h,r)$.
As in Theorem~\ref{thm:universal},
for each $0\leq i\leq h$ and $r\leq j\leq h$,
let $\A_{i,j}$ be the set of $i$-good sequences of length $j$ 
in $V(G)$. Let $\B_{i,j}=[V(G)]^j\setminus \A_{i,j}$.
Let $\A^*_{i,j}$ be the set of $i$-good sequences of length $j$ in $V(G)$
that contains no repetition. Let $f$ be the uniform random mapping from $V(H)$ to $V(G)$.

Let 
\begin{eqnarray*}
E_1 &=& \mbox{ the event that } f(B_0)\in \A_{q,|B_0|} \mbox{ and }
f(B_1)\in [N(f(B_0))]^{|B_1|},\\
F_1&=&\mbox{ the event that each sequence in } \F_1 \mbox{ is mapped to a $(q-1)$-good sequence },\\
E^*_1&=& \mbox{ the event that } f(B_0)\in \A^*_{q,|B_0|} \mbox{ and }
f(B_1)\in [N(f(B_0))]^{|B_1|}.
\end{eqnarray*}
For each $i\in \{2,\dots, m\}$, let 
\begin{eqnarray*}
E_i &=& \mbox{ the event  that } f(B_i)\in [N(f(P(B_i))]^{|B_i|},\\
F_i&=&\mbox{ the event that each sequence in $\F_i$ is mapped to an $(q-d_i)$-good sequence} \\
L_i&=&\mbox{ the event that } f \mbox{ is injective on } B_0\cup B_1\cup \dots \cup B_i.
\end{eqnarray*}

Recall that $s=|B_0|$ and $r=\max_i |P(B_i)|$.
By Theorem~\ref{thm:universal},
\begin{equation} \label{eq:B0-embed}
    \sum_{S\in \A_{q,|B_0|}} |N(S)|^s\geq (1-\beta) n^{2s}p^{s^2}.
\end{equation}
Furthermore, by Lemma ~\ref{lem:goodness-distinct} if $p\geq 4jn^{-1/r}$ then 
\begin{equation} \label{eq:B0-embed2}
    \sum_{S\in \A^*_{q,|B_0|}} |N(S)|^s\geq (\frac{1}{2^{j+1}}-\beta) n^{2s}p^{s^2}.
\end{equation}

Hence, since $|B_1|\geq |B_0|=s$, using \eqref{eq:B0-embed} and convexity we get
\[
\sum_{S\in \A_{q,|B_0|}} |N(S)|^{|B_1|}
=\sum_{S\in \A_{q,|B_0|}} (|N(S)|^s)^{\frac{|B_1|}{s}}\geq
n^s (\frac{1}{n^s} (1-\beta) n^{2s} p^{s^2})^\frac{|B_1|}{s}
= (1-\beta)^{\frac{|B_1|}{|B_0|}} n^{|B_0|+|B_1|}p^{|B_0||B_1|},
\]
and if $p\geq 4jn^{-1/r}$ then 
\[
\sum_{S\in \A^*_{q,|B_0|}} |N(S)|^{|B_1|}
= (\frac{1}{2^{j+1}}-\beta)^{\frac{|B_1|}{|B_0|}} n^{|B_0|+|B_1|}p^{|B_0||B_1|},
\]
Hence,

\begin{equation} \label{eq:E1-probab}
\mathbb{P}(E_1)=\sum_{S\in \A_{q,|B_0|}} \frac{1}{n^{|B_0|}}\cdot 
\frac{|N(S)|^{|B_1|}}{n^{B_1}} 
=\frac{1}{n^{|B_0|+|B_1|}} \sum_{S\in \A_{q,|B_0|}} |N(S)|^{|B_1|}
\geq (1-\beta)^{\frac{|B_1|}{|B_0|}} p^{|B_0||B_1|},
\end{equation}
and if $p\geq 4jn^{-1/r}$ then 

\begin{equation} \label{eq:E1-probab2}
\mathbb{P}(E^*_1)=\sum_{S\in \A^*_{q,|B_0|}} \frac{1}{n^{|B_0|}}\cdot 
\frac{|N(S)|^{|B_1|}}{n^{B_1}} 
\geq (\frac{1}{2^{j+1}}-\beta)^{\frac{|B_1|}{|B_0|}} p^{|B_0||B_1|}
\geq  (\frac{1}{2^{h+2}})^{|B_1|}p^{|B_0||B_1|}.
\end{equation}

We now bound $\mathbb{P}(F_1|E_1)$. Recall that $\F_1$ consists of
parent sets $P(B_j)$ that are contained in $B_1$. By requirement,
these sets have size at least $|P(B_1)|=|B_0|$.
Let $S$ be any fixed sequence in $\A_{q,|B_0|}$.
By the definition of $\A_{q,|B_0|}$, for each $|B_0|\leq k\leq h$, the number of $(q-1)$-good sequences of length $k$ in $N(S)$
is at least $(1-\beta) |N(S)|^k$. So, conditioning on $f$ mapping $B_0$ to $S$ and $B_1$ to $N(S)$,
the probability that $f$ maps any particular sequence in $\F_1$ to an $(q-1)$-good sequence is
at least $(1-\beta)$. Since there are clearly at most $h$ sequences in $\F_1$, the probably
that $f$ maps every sequence in $\F_1$ to a $(q-1)$-good sequence is at least $1-h\beta$.
Hence
\begin{equation} \label{eq:F1-probab}
\mathbb{P}(F_1|E_1)\geq 1-h\beta.
\end{equation}

For each $i=2,\dots, h$, we estimate $\mathbb{P}(E_i|E_1F_1\dots E_{i-1}F_{i-1})$.
Assume the event $E_1F_1\cdots E_{i-1}F_{i-1}$. Since $P(B_i)\subseteq B_{\gamma(i)}$,
where $\gamma(i)<i$, by our assumption, $P(B_i)$ is mapped to a $(q-d_{\gamma(i)})$-good 
sequence. Since a $(q-d_{\gamma(i)})$-sequence is $0$-good by definition, we have $|N(f(P(B_i)))|\geq \al p^{|P(B_i)|} n$.
Hence,
\begin{equation}\label{eq:Ei-probab}
\mathbb{P}(E_i|E_1F_1\dots E_{i-1}F_{i-1})=\frac{|N(f(P(B_i))|^{B_i|}}{n^{|B_i|}}\geq 
\frac{(\al p^{|P(B_i)|} n)^{|B_i|}}{n^{|B_i|}}
=\al^{|B_i|} p^{|P(B_i)||B_i|}.
\end{equation}
Now assume $E_1F_1\dots E_{i-1}F_{i-1}E_i$. Since $S:=f(P(B_i))$ is a $(q-d_{\gamma(i)})$-good
sequence, by definition, for each $|S|\leq k\leq h$ the number of $(q-1-d_{\gamma(i)})$-good sequences
is at least $(1-\beta)|N(S)|^k$. Since there are at most $h$ sequences in $\F_i$, as in deriving 
\eqref{eq:F1-probab}, we have
\begin{equation} \label{eq:Fi-probab}
\mathbb{P}(F_i|E_1F_1\dots E_{i-1}F_{i-1}E_i) \geq 1-h\beta.
\end{equation}
By ~\eqref{eq:E1-probab},~\eqref{eq:F1-probab},~\eqref{eq:Ei-probab}, and ~\eqref{eq:Fi-probab}, 
\begin{align} \label{eq:f-hom}
\mathbb{P}(\mbox{$f$ is a homomorphism}) &\geq \mathbb{P}(E_1F_1\dots E_{m-1}F_{m-1}E_m) \nonumber\\
&\geq (1-\beta)^{\frac{|B_1|}{|B_0|}}\al^{\sum_{i=2}^m |B_i|} (1-h\beta)^{m-1}p^{|B_0||B_1|+\sum_{i=2}^m |P(B_i)||B_i|}\\ 
&= c_1 p^{e(H)}.\nonumber
\end{align}
The proves the first and the main part of the theorem. 

For the second statement, suppose $h_{K_{1,r}}(G)>c_2n^r$. Then 
\[p=p_r=(h_{K_{1,r}}/n^{r+1})^{1/r}\geq c_2^{1/r} n^{-1/r}\geq (4h/\al)^{1/r}n^{-1/r}.\]
For each $i\geq 2$, we bound $\mathbb{P}(L_i|E^*_1F_1E_2F_2L_2\dots L_{i-1}E_iF_i)$.
Assume $E^*_1F_1E_2F_2L_2\dots L_{i-1}E_iF_i$. By our assumption $P(B_i)$ is mapped to
a $(q-d_{\gamma(i)})$-good sequence and $B_i$ is mapped into $N(f(P(B_i))$.
Since $f(P(B_i))$ is $0$-good, $|N(f(P(B_i))|\geq \al p^{|P(B_i)|} n\geq \al [(4h/\al)^{1/r} n^{-1/r}]^r n=4h$,
where we used the fact that $|P(B_i)|\leq r$. Given $E^*_1F_1E_2F_2L_2\dots L_{i-1}F_{i-1}$, the probability
that $f$ maps $B_i$ injectively into $N(F(P(B_i))$ and avoids $f(B_0\cup B_1\cup\dots\cup B_{i-1})$ is at least
$(3h)_{|B_i|}/(4h)^{|B_i|}>(1/2)^{|B_i|}$. Hence, 
\begin{equation} \label{eq:Li-bound}
\mathbb{P}(L_i|E^*_1F_1E_2F_2L_2\dots L_{i-1}E_iF_i)>(1/2)^{|B_i|}.
\end{equation}
By ~\eqref{eq:E1-probab2}, ~\eqref{eq:Li-bound}, and a similar calculation as in ~\eqref{eq:f-hom}, we have
\begin{eqnarray*}
\mathbb{P}(f \mbox{ is an injective homomorphism})&\geq& \mathbb{P}(E^*_1F_1E_2F_2L_2\dots E_mF_mL_m) \\
&\geq&  (\frac{1}{2^{h+2}})^{|B_1|}(\frac{1}{2})^{|B_2|+\cdots+|B_m|} c_1 p^{e(H)}\\
&\geq& \frac{1}{2^{h^2}} c_1  p^{e(H)} =c_3 p^{e(H)}.
\end{eqnarray*}
This proves the second part of the theorem. \hfill $\Box$


\section{Concluding remarks} \label{sec:conclusion}

\medskip

In this note, we used a nested variant of the dependent random choice to not only embed
an appropriate tree-degenerate bipartite graph $H$ in a host graph $G$, but also give tight (up to a multiplicative factor)
counting bound on the number of copies of $H$ in $G$. In this variant, we get extra goodness features
almost for free. It will be interesting to find more applications of it.

Another interesting feature of Theorem~\ref{thm:main} is that the condition of the host graph is relaxed from $1$-norm density
to $r$-norm density, which makes the result more flexible for applications. In principle, one could
study the so-called {\it r-norm} Tur\'an problem for bipartite graphs, where one wants to determine the maximum $r$-norm density of
an $H$-free graph on $n$-vertices for a given bipartite graph $H$. The problem seems particularly natural for the family
of $r$-degenerate graphs. For hypergraph co-degree problems, such a study has recently been initiated
by Balogh, Clemen, and Lidick\'y \cite{BCL, BCL2}.

Last but not least, it will be highly desirable to  make more progress on Conjecture~\ref{degenerate-conjecture} beyond
the following general bound obtained by Alon, Krivelevich, and Sudakov \cite{AKS}, 
which has stood as the best known bound in the last two decades.
\begin{theorem}[\cite{AKS}]
If $H$ is an $r$-degenerate bipartite graph, then $\ex(n,H)=O(n^{2-1/4r})$.
\end{theorem}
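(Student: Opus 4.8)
The plan is to prove the equivalent quantitative statement: there is a constant $C=C(H)$ so that every $n$-vertex graph $G$ with $e(G)\geq Cn^{2-1/(4r)}$ contains a copy of $H$. Write $h=|V(H)|$, and fix a degeneracy order $v_1,\dots,v_h$ of $H$ in which each $v_i$ has at most $r$ neighbours $N^-(v_i)$ among its predecessors. I would embed $H$ into $G$ greedily along this order: having placed $v_1,\dots,v_{i-1}$ injectively as $\phi$, choose $\phi(v_i)$ in the common neighbourhood $N_G(\phi(N^-(v_i)))$, avoiding the $i-1$ vertices already used. For this to go through it suffices to first produce one set $U\subseteq V(G)$ with $|U|\geq h$ such that every $S\subseteq U$ with $|S|\leq r$ satisfies $|N_G(S)\cap U|\geq h$; then all the greedy choices can be made inside $U$ and none of them can be blocked. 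The essential point, and what separates this from the one-sided situation behind Theorem~\ref{thm:one-side}, is that the common neighbourhoods must return to $U$ rather than merely be large in $G$: since a general $r$-degenerate bipartite $H$ can have high-degree vertices in \emph{both} parts, one cannot reserve $U$ for one part and place the other part freely into its $G$-neighbourhoods, so $U$ must absorb all of $H$.

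Constructing such a $U$ is the heart of the matter, and I would do it with dependent random choice applied in a nested way. One round, sampling a random tuple $T$ of length of order $r$ and setting $U_1=N_G(T)$, produces, after discarding one vertex from each of the few bad $r$-subsets, a set $U_1$ whose size is still a growing power of $n$ and in which every $r$-subset $S$ has $|N_G(S)|\geq h$; here the hypothesis $e(G)\geq Cn^{2-1/(4r)}$ is exactly what keeps $U_1$ polynomially large under sampling of this width. The defect is that these common neighbourhoods live in $G$, not in $U_1$. To repair this one iterates: repeat the sampling, now steering the newly chosen coordinates so that at each stage almost every small subset of the current common neighbourhood retains a large common neighbourhood after adjoining the next coordinate; then $|N_G(S)\cap U|$, not just $|N_G(S)|$, ends up at least $h$ for all but a deletable few $r$-subsets $S$ of the final set $U$, and a last clean-up gives the desired $U$. (One could instead hope to exhibit $H$ as a subgraph of a tree-degenerate graph all of whose parent sets have size at most $4r$ and then quote the Tur\'an consequence of Theorem~\ref{thm:main}, but producing such a decomposition out of the degeneracy order does not look any easier than the argument above.)

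The step I expect to be the real obstacle is this nested one --- forcing the common neighbourhoods back into $U$, uniformly over all $r$-subsets, rather than settling for the one-shot statement that most $r$-subsets of $N_G(T)$ have a large common neighbourhood in $G$. Securing the ``within $U$'' version requires, in effect, that $U$ already be the common neighbourhood of a further tuple of size of order $r$, so that adjoining the $\le r$ back-neighbours of a new vertex still leaves at least $h$ common neighbours to spare, \emph{and} that this hold simultaneously for almost all subsets --- which is what the nesting delivers, but at the price of inflating the relevant budget of vertices to order $4r$. This is precisely what drives the exponent down to $2-1/(4r)$ rather than $2-1/r$. Since any improvement of the constant $4$ would amount to progress on Erd\H{o}s's Conjecture~\ref{degenerate-conjecture}, I would expect pushing below $4r$ to demand a genuinely new idea rather than a sharpening of the accounting above.
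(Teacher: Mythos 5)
You are proving a statement that this paper only quotes: the $O(n^{2-1/4r})$ bound is cited from Alon--Krivelevich--Sudakov in the concluding remarks and no proof of it appears here, so there is no in-paper argument to compare against. Judged on its own terms, your write-up is a plan rather than a proof, and the plan leaves out precisely the step that constitutes the theorem. Your reduction is fine: if one can find $U\subseteq V(G)$ with $|U|\geq h$ such that every $S\subseteq U$ with $|S|\leq r$ satisfies $|N_G(S)\cap U|\geq h$, then greedy embedding along the degeneracy order succeeds. But the construction of such a $U$ is exactly the hard content, and your description of it (``repeat the sampling, now steering the newly chosen coordinates so that at each stage almost every small subset \ldots retains a large common neighbourhood'') specifies no actual sampling scheme, no expectation computation, and no union-bound bookkeeping; you yourself flag it as ``the real obstacle.'' Converting the one-round conclusion about $|N_G(S)|$ into a statement about $|N_G(S)\cap U|$, uniformly over the small subsets of $U$, is the whole point of AKS's ``clever twist'' (and, in the structured tree-degenerate setting, of the nested goodness lemma, Theorem~\ref{thm:universal}); asserting that iterating dependent random choice ``delivers'' it, with the exponent $1/(4r)$ emerging from an unstated budget of order $4r$ sampled vertices, is not an argument.

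There is also a concrete soundness issue in the one piece of mechanism you do describe. The clean-up ``discard one vertex from each of the few bad $r$-subsets'' is legitimate when badness means $|N_G(S)|<h$, because that condition does not depend on $U$, so deletions only remove bad sets. Once badness means $|N_G(S)\cap U|<h$, deleting vertices of $U$ shrinks exactly the quantity you need to keep large for the surviving subsets, so the clean-up can create new bad subsets and in principle cascade. This can be repaired (e.g., insist that good subsets have at least $h$ plus the total number of planned deletions common neighbours in $U$, and bound that number in expectation), but your proposal contains none of this accounting, and it interacts with the very parameter choices that are supposed to produce the exponent $2-1/(4r)$. So as it stands the proposal identifies the correct target lemma and the correct general tool, but the proof of the key lemma --- the nested dependent random choice step with its quantitative analysis --- is missing.
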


\end{document}